\DeclareSymbolFontAlphabet{\amsmathbb}{AMSb}%
\newcommand{\bse}{\begin{subequations}}
\newcommand{\ese}{\end{subequations}}
\newcommand{\pt}{\partial_t}
\newcommand{\Assum}{(\textbf{A1})--(\textbf{A6})}
\newcommand{\bid}{\mathbf{I}}
\newcommand{\bK}{\mathbf{K}}
\newcommand{\bth}{\boldsymbol{\Theta}}
\newcommand{\bu}{\mathbf{u}}
\newcommand{\btheta}{\boldsymbol{\theta}}
\newcommand{\by}{\mathbf{y}}
\newcommand{\bv}{\mathbf{v}}
\newcommand{\bff}{\mathbf{f}}
\newcommand{\bw}{\mathbf{w}}
\newcommand{\bz}{\mathbf{z}}
\newcommand{\br}{\mathbf{r}}
\newcommand{\be}{\mathbf{e}}
\newcommand{\bep}{\boldsymbol{\varepsilon}}
\DeclareMathOperator{\divr}{div}
\newcommand{\Wcal}{\mathcal{W}}
\newcommand{\Ucal}{\mathcal{U}}
\newcommand{\Pcal}{\mathcal{P}}
\newcommand{\Tcal}{\mathcal{T}}
\newcommand{\Rcal}{\mathcal{R}}
\newcommand{\Xcal}{\mathcal{X}}
\newcommand{\Mcal}{\mathcal{M}}
\newcommand{\real}{{\amsmathbb R}}
\newcommand{\rtzero}{\amsmathbb{RT}_0}
\newcommand{\pspace}{\amsmathbb{P}}
\newcommand{\norm}[1]{\left\lVert#1\right\rVert}
\newtheorem{theorem}{Theorem}[section]
\newtheorem{corollary}{Corollary}[theorem]
\newtheorem{lemma}[theorem]{Lemma}
\newtheorem{remark}{Remark}[section]
\newtheorem{df}[theorem]{Definition}
\let\phi\varphi
\newcommand{\pushright}[1]{\ifmeasuring@#1\else\omit\hfill$\displaystyle#1$\fi\ignorespaces}
\newcommand{\pushleft}[1]{\ifmeasuring@#1\else\omit$\displaystyle#1$\hfill\fi\ignorespaces}
\begin{document}

\title{Monolothic and splitting solution schemes for  fully coupled quasi-static thermo-poroelasticity with nonlinear 
convective transport\thanks{This work forms part of Norwegian Research Council project 250223}}

\author{Mats Kirkes\ae{}ther Brun\footnotemark[2]\and
Elyes Ahmed\footnotemark[2]
\and Inga Berre\footnotemark[2]
\and Jan Martin Nordbotten\footnotemark[2]\ \footnotemark[3]
\and Florin Adrian Radu\footnotemark[2] 
}
\date{\today}
\maketitle

\renewcommand{\thefootnote}{\fnsymbol{footnote}}

\footnotetext[2]{Department of Mathematics, University of Bergen, P. O. Box 7800, N-5020 Bergen, Norway.
\href{mailto:mats.brun@uib.no}{mats.brun@uib.no},
\href{mailto:elyes.ahmed@uib.no}{elyes.ahmed@uib.no},
\href{mailto:inga.berre@uib.no}{inga.berre@uib.no},
\href{mailto:jan.nordbotten@uib.no}{jan.nordbotten@uib.no},
\href{mailto:florin.radu@math.uib.no}{florin.radu@uib.no}.
}
\footnotetext[3]{Department of Civil and Environmental Engineering, Princeton University, Princeton, N. J., USA.}
\renewcommand{\thefootnote}{\arabic{footnote}}

\numberwithin{equation}{section}
    \setcounter{secnumdepth}{5}
    \renewcommand{\thesubsubsection}{\thesubsection.\Alph{subsubsection}}
\begin{abstract}
This paper concerns splitting-based iterative procedures for the coupled nonlinear thermo-poroelasticity model problem. The thermo-poroelastic model problem we consider is formulated as a three-field system of PDE's, consisting of an energy balance equation, a mass balance equation and a momentum balance equation, where the primary variables are temperature, fluid pressure, and elastic displacement. Due to the presence of a nonlinear convective transport term in the energy balance equation, it is convenient to have access to both the pressure and temperature gradients. Hence, we introduce these as two additional variables and extend the original three-field model to a five-field model. For the numerical solution of this five-field formulation, we compare  three   approaches that differ by how we treat the coupling/decoupling   between the flow and/from heat and/from mechanics; these approaches have in common a simultaneous application of the fixed-stress splitting scheme  on both the non-linearity and the    coupling structure of the problem. More precisely, the derived procedures   transform a nonlinear and fully coupled problem into a set of simpler subproblems to be solved sequentially in an iterative fashion.   We  provide a convergence proof for the derived algorithms, and validate our results through several numerical examples.
\end{abstract}
\vspace{3mm}

\noindent{\bf Key words:} Quasi-static thermo-poroelasticity; nonlinear convective transport; porous media; monolithic scheme; fixed-stress splitting iterative coupling;  L-scheme linearization;  contraction mapping; mixed finite elements.


\pagestyle{myheadings} \thispagestyle{plain} \markboth{M. K. Brun}{}
\section{Introduction}\label{sec:presentation}
\subsection{Problem statement}
The field of \emph{poroelasticity} is concerned with the interaction between viscous fluid flow and elastic solid deformation within a porous material, and was pioneered through the works of K. Terzhagi~\cite{terzaghi1944theoretical} and M. A. Biot~\cite{biot1941general, biot1972theory}. In the fully-saturated, quasi-static regime, the mathematical modeling of such processes constitutes a coupled two-field linear model where the primary variables are the fluid pressure and the elastic displacement of the solid. This is known as the quasi-static Biot's model. 

In many important applications, such 
as geothermal energy extraction, nuclear waste disposal and carbon storage, 
temperature also plays a vital role and must therefore 
be included in the aforementioned model. Thus, we consider here a \emph{thermo-poroelastic} system which can be seen as a generalization of the Biot system to the non-isothermal case, i.e., the coupled processes are heat, flow, and geomechanics. Since it appears to be the cornerstone of many complex models, we focus on the following nonlinear and coupled quasi-static thermo-poroelastic equations as it is exposed in~\cite{brun2018thporo}: find 
the temperature $T$, the pressure $p$, and the displacement 
$\bu$ such that
\bse\label{thporo}
	\begin{alignat}{3}
	\partial_{t}\psi(p,\bu,T) + c_f(\bK \nabla p) \cdot \nabla T 
		- \nabla \cdot(\bth \nabla T) &=z,\quad&&\textnormal{in } \Omega \times (0,t_{f}),
		\label{thporo:primalheat}\\
		-\nabla \cdot \btheta(\bu) + \alpha \nabla p + \beta \nabla T &= \bff,
		\quad&&\textnormal{in }\Omega \times (0,t_{f}),\label{thporo:primalflow}\\
		\partial_t \phi(p,T,\bu) - \nabla \cdot(\bK \nabla p) &= g,\quad&&\textnormal{in } \Omega \times (0,t_{f}),\label{thporo:primalmech}\\
		\label{thporo:BC}T(\cdot,0) = T_0, \quad \bu(\cdot,0) = \bu_0,\quad p(\cdot,0) &= p_0, \quad&& \textnormal{in } \Omega,\\
		\label{thporo:IC}T = 0, \quad \bu = 0, \quad p &= 0, \quad &&\textnormal{on } \partial\Omega \times (0,t_{f}).
	\end{alignat}
\ese
In the above model,   $\Omega$ is a bounded (connected and open) domain in $\real^{d}$, $d=2$ or $3$,  and  $t_f > 0$ is the final time. The function $z$ is the heat source, $g$ is the mass source, and $\bff$ is the  body force.  The functionals $\psi$ and $\phi$ denote the heat content and fluid content, respectively, i.e.,  $\psi(p,\bu,T):=a_0T - b_0p + \beta \nabla \cdot \bu$, and $\phi(p,\bu,T):=c_0p - b_0 T + \alpha \nabla \cdot \bu$,  where  $c_{0}$   is the constrained-specific storage coefficient, $a_0$ is the effective thermal capacity, $b_0$ is the thermal dilation coefficient,  $\alpha$  is the Biot--Willis constant, and $\beta $ is the thermal stress coefficient.  

The parameter $c_{f}$ is the volumetric heat capacity of the fluid,  $\bK = (K_{ij})_{i,j=1}^d$ is the permeability divided by fluid viscosity, and $\bth = (\Theta_{ij})_{i,j=1}^d$ is the effective thermal conductivity. The function $\btheta$ denotes the effective stress tensor, i.e.,  $\btheta(\bu):= 2\mu \bep(\bu) + \lambda \nabla \cdot \bu \bid $, where $\bep(\bu) := (\nabla \bu + \nabla \bu^\top)/2$ the symmetric part of $\nabla \bu$, and  $\bid$  is the identity tensor. Finally,   $T_0$  is the initial temperature, $\bu_0$ is the initial displacement and $p_0$ is the initial pressure.

Note that the above model introduces a nonlinearity with a coupling term being the convective transport  
term in the energy balance~\eqref{thporo:primalheat}, which strongly complicates the
problem compared to the isothermal case (i.e., the linear Biot's model). Note that if $b_0 = \beta = 0$, the flow and mechanics decouples from the heat, and Biot's model is recovered. For the  derivation of the constitutive equations of thermo-poroelasticity we refer to the works~\cite{van2017thermoporoelasticity, suvorov2011macroscopic, gatmiri1997formulation}, and particularly to~\cite{brun2018thporo, van2017thermoporoelasticity} where the above model was derived within the framework of the two-scale asymptotic expansion method (see \textit{e.g.}~\cite{hornung2012homogenization} for a review of this technique).
\subsection{Weak solution and well-posedness of the continuous problem}\label{subsec:var}
The common structure of mathematical models which are based
on (systems of) scalar conservation laws of the form~\eqref{thporo:primalheat} and where nonlinear gradient terms appear, suggests  introducing  the heat flux; $\br := -\bth \nabla T$, or the Darcy flux; $\bw := -\bK \nabla p$, as an additional variable, thus, either the nonlinear coupling term $c_f(\bK \nabla p) \cdot \nabla T$ becomes  $\left[-c_f(\bw\cdot \nabla T)\right]$ or $\left[-c_f((\bK\otimes \bth^{-1})\br\cdot \nabla p)\right]$, \textit{e.g.}~\cite{MR2116915, MR2039576}. Precisely, it is well known that this term,  dealing non-linearly  with the coupling convective term, can be quite difficult to approximate correctly in its actual form. This altogether leads to challenging numerical issues. Furthermore,  the choice to introduce the heat flux  or the Darcy flux as a new variable   depends strongly on which process (the flow or the heat flow) dominates, and may  result   in a different treatment of the convective term. Here,  to avoid some of these complexities, we adopt from~\cite{brun2019well}  the mixed form for both the heat and flow subproblems~\eqref{thporo:primalheat} and \eqref{thporo:primalflow}, taking in mind that Mixed Finite Element (also Finite Volume) literature  has developed techniques to handle convective terms~\cite{MR2846759,MR2346377}.  Throughout the paper, we assume that the following 
 assumptions hold true
 \begin{enumerate}[label=(\textbf{A\arabic*})]\label{assumpA}
\item $\bK: \real^{d}\rightarrow \real^{d\times d}$ is assumed  to be constant in time, symmetric, definite and positive; there exist $k_m>0$ and $k_M$ such that
$k_m |\zeta|^2 \leq \zeta^\top \bK(x) \zeta \textnormal{ and } |\bK(x)\zeta| \leq k_M |\zeta|, \ \forall \zeta \in \real^d \setminus \{0\}.$\label{Kdef}
\item $\bth: \real^{d}\rightarrow \real^{d\times d}$ is assumed  to be constant in time, 
symmetric, definite and positive; there exist $\theta_m>0$ and $\theta_M$ such that $\theta_m |\zeta|^2 \leq \zeta^\top \bth(x) \zeta \textnormal{ and } |\bth(x)\zeta| \leq \theta_M |\zeta|, \ \forall \zeta \in \real^d \setminus \{0\}$.\label{thdef}
\item The coefficients $a_{0}$, $b_{0}$, $c_{0}$, $c_{f}$, $\alpha$ and $\beta$ are strictly positive constants.
\item The coefficients  $a_0,b_0$ and $c_0$ are such that 
$c_0-b_0 >0$ and $a_0-b_0 > 0$. \label{constraints}
\item The source terms are such that  
$z, g\in L^2(0,t_f;L^2(\Omega))$ and  $\bff\in H^1(0,t_f;L^2(\Omega))$.
\item The initial data are such that   $p_0, T_0 \in H^1_0(\Omega)$ and $\bu_0 \in (L^2(\Omega))^d$.
\end{enumerate}
Before transcribing the mixed variational formulation of the problem, we introduce some  notations: $$\Tcal := L^2(\Omega),\ \ \Rcal := H(\divr,\Omega),\ \ \Pcal := L^2(\Omega),\ \ \Wcal := H(\divr,\Omega), \ \ \Ucal := (L^2(\Omega))^d,$$ where we denote by $(\cdot, \cdot)$ the standard $L^2(\Omega)$ inner product, and by $\norm{\cdot}$ the induced $L^2(\Omega)$ norm. Due to \ref{Kdef} and \ref{thdef}, we can use the tensors $\bK$ and $\bth$ (and their inverses) to define $L^2(\Omega)$-equivalent norms which we denote by $\norm{\bu}_{\bK} := (\bK \bu, \bu)^{1/2}$ (and $\norm{\bu}_{\bK^{-1}} := (\bK^{-1}\bu, \bu)^{1/2}$), and similarly for $\bth$. The  variational formulation of~\eqref{thporo} then reads as follows:
\begin{df}[The continuous formulation~\cite{brun2019well}] Assuming that~\textnormal{\Assum}  hold true.  The fully  coupled mixed-primal formulation of~\eqref{thporo} reads as follows:  find $(T(t), \br(t), p(t), \bw(t), \bu(t)) \in \Tcal \times \Rcal \times \Pcal \times \Wcal \times \Ucal,$ such that for a.e. $t \in (0,t_f)$ there holds
\bse\label{nonlinvar}
	\begin{alignat}{2}
		(\partial_{t}\psi(p,T,\bu), S) 
		+ c_f(\bw \cdot \bth^{-1} \br, S) + (\nabla \cdot \br,S) &= (z,S), \quad 
		&&\forall S \in \Tcal,\label{vf1}\\
		(\bth^{-1} \br, \by) - (T, \nabla \cdot \by) &= 0, \quad
		&&\forall \by \in \Rcal, \label{vf2}\\
		(\pt \phi(p,T,\bu), q) 
		+ (\nabla \cdot \bw,q) &= (g,q), \quad
		&&\forall q \in \Pcal, \label{vf3}\\
		(\bK^{-1} \bw, \bz) - (p,\nabla \cdot \bz) &= 0, \quad 
		&&\forall \bz \in \Wcal, \label{vf4}\\
		(\btheta(\bu),\bep(\bv)) -(\beta T+ \alpha p, \nabla \cdot \bv) &= (\bff,\bv), \quad
		&&\forall \bv \in \Ucal, \label{vf5}
	\end{alignat}
 together with the initial condition~\eqref{thporo:IC}. 
\ese
\end{df}
The above variational problem was analyzed in \cite{brun2019well}. There, it was shown that under the assumption that the heat flux (or Darcy flux) is such that $\br(t) \in (L^\infty(\Omega))^d$, for $t \in (0,t_f)$, the problem~\eqref{nonlinvar} has a unique weak solution. Moreover, it was shown that with additional regularity on the data, i.e., $\bff \in H^2\left(0,t_f;(L^2(\Omega))^d\right)$, $h,g \in H^1(0,t_f;L^2(\Omega))$, and $T_0, p_0 \in H_0^1(\Omega) \cap H^2(\Omega)$, the fluxes are bounded functions.
\subsection{Goal and positioning of the paper}
The simulation of thermo-poroelasticity problems  is very difficult because of the coexistence of different physics  that require coupling. For this type of problems, there are typically three different coupling approaches employed in modeling fluid flow coupled with reservoir geomechanics. They are known as the fully implicit, the explicit (loosely or weakly) coupling, and the \textit{splitting-iterative} approaches. The main
issue for the applicability of the implicit approach that  solves  
simultaneously the above \textit{three-phenomena} (flow, heat and  mechanics) problem is that  it results in a very large system to be
solved at each time step.  Note however that  this approach has  excellent stability properties~\cite{MR3899054,MR1690489}. If going weakly coupled, the resulting  approach enjoys a lower computational cost compared to the implicit (monolithic) one but known to be inaccurate in general, and only conditionally stable~\cite{MR2284413,MINKOFF200337}. Here, we adopt the iterative coupling approach that lies  between the implicit and explicit approach,  and resolves iteratively the two/three subsystems (depending on the choice of decoupling) by exchanging the values of the shared state variables in an iterative fashion~\cite{both2017robust,mikelic2014numerical}.

In this paper, adopting an iterative method for  a \textit{nonlinear and fully coupled three-processes} problem    appears    as a natural approach (more than an option!) as the implicit one leads  to  a huge system (particularly if MFE methods are adopted~\cite{ahmed:hal-01687026,castelletto2014unified,garcia2005numerical,MR3473457})  incorporating different equations varied in type,  such as coupling linear and non-linear systems, that must  be solved at each time step.  The advantage of the iterative approaches transcribed in this paper is that, at each iteration, smaller, easier-to-solve systems cooperate iteratively through algorithms~\cite{garcia2005numerical,MR3343599}.  Another advantage that distinguishes our approaches is  the possibility to \textit{reuse existing   codes} for different numerical schemes and coupling techniques specialized to each component of the problem~(see~\cite{MR3624734,MR2446503} for multiples processes model). For the classical linear  poroelasticity, the iterative coupling procedures mentioned in the above has been studied extensively~\cite{ahmed2019adaptive,both2017robust, castelletto2015accuracy, iliev2016numerical, kim2009stability, kolesov2017splitting, mikelic2014numerical, mikelic2013convergence, tran2005overview}. In particular, two such algorithms have received considerable attention; the \textquotedblleft Undrained Split\textquotedblright (constant fluid mass during structure deformation),  and the \textquotedblleft Fixed Stress Split\textquotedblright (constant volumetric mean total stress during solution of flow problem). In~\cite{kim2009stability} these were first shown to be unconditionally stable. In~\cite{mikelic2014numerical, mikelic2013convergence} contraction estimates and rates of convergence were derived. 

Building further on the idea of the Undrained Split/Fixed Stress Split algorithms is the \textit{$L$-scheme}. For solving of coupled problems, this involves adding an artificial stabilization term to one or more of the subproblems with a parameter $L>0$. Here, the quantity held constant during solving of one of the subproblems needs not have any physical interpretation. In this sense, the $L$-scheme generalizes the Undrained Split/Fixed Stress Split algorithms, and due to the removal of physical constraints on the stabilization terms, allows for further optimization. The $L$-scheme can also be employed as a linearization procedure for nonlinear problems, with the parameter $L>0$ mimicking the Jacobian from Newton iteration. However, in order to determine the parameter $L>0$, for any given problem, derived convergence estimates are necessary. The $L$-scheme has been shown to perform robustly for Richards equation~\cite{list2016study, MR2079503} and for both linear and nonlinear coupled flow and geomechanics~\cite{MR3827264,both2017robust}. 

Although the literature on iterative coupling procedures for (isothermal) poroelastic problems is quite extensive, thermo-poroelastic problems have not received the same amount of attention. Sequential iterative methods for thermo-poroelasticity was considered in~\cite{kim2015unconditionally}, but for the linear case. Iterative splitting schemes for separate poroelasticity and thermoelasticity  problems were considered in~\cite{kolesov2014splitting}. Compared to problems of (two-field) coupled flow and mechanics (which can be solved either sequentially or monolithically) we now have additional options in partial decoupling, i.e., solving two of the subproblems together decoupled from the third. Combinatorially, this yields six combinations of iterative procedures, ranging from monolithic to fully decoupled. In this work, we propose six iterative algorithms for thermo-poroelasticity based on these six combinations of coupling/decoupling. In particular, we employ variations of the $L$-scheme in all six algorithms, with artificial stabilization terms added to both the flow and heat sub-problems. Here, the main \textit{advantage} of the $L$-scheme is that it treats simultaneously the coupling and the non-linearity effects, thus,  no inner iterative approaches are required, see \textit{e.g.}~\cite{MR3771899} where $L$-scheme type  approaches  are developed to treat iteratively
a combined domain decomposition and nonlinearity problem.   The convergence in most cases is  linear but in the required energy norms. Furthemore, the necessary constraint on the time step is not severe.

The reason we propose all the possible algorithms is the following: the coupling strength of the heat, flow and mechanics may vary depending on the physics at hand. Precisely, to develop robust and efficient solution procedures for the   \textit{three-processes problem} at hand, one  should take into account
which process (\textit{the mechanics} and/or  \textit{flow} and/or  \textit{heat flow}) dominates the full
problem. Thus, to be   agnostic  towards  
the dominating processes and derive  a 
framework for this model problem, we propose six variations of iterative coupling/decoupling algorithms for thermo-poroelasticity, covering all possibilities of varying coupling strength between the three physical processes involved. This derivation  is also important for  practitioners  to \textit{reuse existing   codes} where  smaller problems and/or algorithms  can be  \textit{easily coded,  cheaply combined},  and
\textit{efficiently used} in  \textit{practical simulations}. Note that developed algorithms are applicable on any numerical schemes used to obtain the solutions of the different  processes~\cite{phillips2008coupling,yi2014convergence}.  For the convergence analysis,  we derive energy-type estimates, from which we infer the convergence of the iterate solutions as well as obtaining strict lower bounds on the stabilization parameters, and an upper bound on the time step. However,  a \textquotedblright cut-off\textquotedblleft  \ operator $\Mcal$ is introduced in the mixed setting in order to make the iterative schemes
converge. Several numerical tests validate our proposed algorithms. In particular, we show that by using the derived stabilization estimates, the proposed algorithms perform robustly with respect to both mesh refinement and a wide range of different problem parameters.

The article is organized as follows: In Section~\ref{sec:discr} we present the fully discretization of the  thermo-poroelasticity model, and in Section~\ref{sec:algos} we present all six iterative algorithms. In Section~\ref{sec:conv}, convergence analysis based on contraction estimates are derived, from which the well-posedness of the discrete scheme is inferred 
in addition to the bounds on the stabilization parameters and time step. In Section~\ref{sec:experiments} we provide several numerical experiments, and finally in Section~\ref{sec:conclusions} some concluding remarks.
\section{Discrete setting}\label{sec:discr}
 Let  $\Xcal_h$ be a simplicial  mesh of $\Omega$,  
matching in the sense that for two distinct elements of $\Xcal_h$ their
intersection is either an empty set or their common vertex or edge.   Let $h_{K}$ denote the diameter of $K\in \Xcal_h$ and let $h$
be the largest diameter of all such triangles, i.e., $h:= \max_{K\in \Xcal_{h}}h_K$. For the time partition, we let $\{ t^n : n=0,1,\cdots,N \}$ be the discrete time steps, where $0 := t^0 < t^1 < \cdots < t^N = t_f$, and  
$\tau^n = t^n - t^{n-1}$, $n\geq 1$, be  the difference between consecutive discrete times. In other words, we have   $t^{n}:=\sum_{\ell=1}^{n}\tau^{\ell}, \ 1 \leq n\leq  N$, and therefrom $t_f=\sum_{n=1}^{ N}\tau^{n}$.

For the discrete spaces, we let $\Tcal_h, \Rcal_h, \Pcal_h, \Wcal_h$ and $\Ucal_h$ be suitable finite element spaces corresponding to the infinite dimensional spaces of subsection~\ref{subsec:var}, where we assume that 
\begin{equation}\label{divcondition}
\divr \Rcal_h = \Tcal_h \quad \textnormal{ and } \quad \divr \Wcal_h = \Pcal_h. 
\end{equation}
For the time discretization we will employ a backward Euler scheme. 
For the sake of simplicity, we assumed the source terms $\bff$, $g$ and $z$ to be piecewise constant in time. We then denote by $(T_h^n, \br_h^n, p_h^n, \bw_h^n, \bu_h^n)$ the discrete counterpart of the solution tuple to problem \eqref{nonlinvar} at time $t^n$. 
\begin{df}[The  coupled $\textit{mixed}\times\textit{mixed}$ and \textit{Galerkin} finite element scheme]\label{def:discretevar}
The discrete formulation of the problem \eqref{nonlinvar}  reads: given 
$\psi(p^{0}_{h},T^{0}_{h},\bu_{h}^{0})$ and $\phi(p^{0}_{h},T^{0}_{h},\bu_{h}^{0})$, then, for $n=1,\cdots,N$,  find $(T^n_h, \br^n_h, p^n_h, \bw^n_h, \bu^n_h) \in \Tcal_h \times \Rcal_h \times \Pcal_h \times \Wcal_h \times \Ucal_h$ such that,
\bse\label{discretevar}
	\begin{alignat}{2}
		\nonumber &(\psi(p^{n}_{h},T^{n}_{h},\bu_{h}^{n}),S_h)
		+ \tau^n c_f(\bw^{n,M}_h \cdot \bth^{-1} \br^{n,M}_h, S_h) + \tau^n(\nabla \cdot \br^n_h,S_h) \\
		&\quad\qquad \quad\quad\qquad \quad\quad\qquad \qquad= \tau^n(z^{n},S_h) + (\psi(p^{n-1}_{h},T^{n-1}_{h},\bu_{h}^{n-1}),S_h), \quad 
		&&\forall S_h \in \Tcal_h,\label{discr1} \\
		&(\bth^{-1} \br^n_h, \by_h) - (T^n_h, \nabla \cdot \by_h) = 0, \quad
		&&\forall \by_h \in \Rcal_h, \label{discr2}\\
		 &\psi(p^{n}_{h},T^{n}_{h},\bu_{h}^{n}), q_h)
		+ \tau^n(\nabla \cdot \bw_h^n,q_h) = \tau^n(g^{n},q_h) + (\psi(p^{n-1}_{h},T^{n-1}_{h},\bu_{h}^{n-1}), q_h), \quad
		&&\forall q_h \in \Pcal_h, \label{discr3} \\
		&(\bK^{-1} \bw^n_h, \bz_h) - (p^n_h,\nabla \cdot \bz_h) = 0, \quad 
		&&\forall \bz_h \in \Wcal_h, \label{discr4}\\
		&2\mu(\bep(\bu^n_h),\bep(\bv_h)) + \lambda (\nabla \cdot \bu^n_h, \nabla \cdot \bv_h) 
		- (\beta T^n_h +\alpha p^n_h, \nabla \cdot \bv_h) = (\bff^{n},\bv_h), \quad
		&&\forall \bv_h \in \Ucal_h. \label{discr5}
	\end{alignat}
\ese
where the functions $(\bw^{n,M}_h,\br^{n,M}_h)$ are defined as,
\begin{alignat}{2}\label{convective_term_approx}
       \bw^{n,M}_h:=\min(|\bw^{n}_h|,M)\dfrac{\bw^{n}_h}{|\bw^{n}_h|},\quad\textnormal{and }\quad\br^{n,M}_h:=\min(|\br^{n}_h|,M)\dfrac{\br^{n}_h}{|\br^{n}_h|},
      \end{alignat}
      with $M$ is a  fixed positive real number and $|\bv|:=\sqrt{\sum_{i=1}^{d}(\bv)_{i}^{2}}$.
\end{df}
In the above scheme,  we used $(\bw^{n,M}_h \cdot \bth^{-1} \br^{n,M}_h, S_h)$ for  the approximation of the convective coupling term    
instead of the original $(\bw^{n}_h \cdot \bth^{-1} \br^{n}_h, S_h)$. The reason for this  approximation 
 will be clarified later. The  equations~\eqref{discr1}-\eqref{discr2} form the discrete mixed scheme of the   \textit{heat subproblem},   \eqref{discr3}-\eqref{discr4} form the  discrete mixed scheme  for the \textit{flow subproblem}, and  \eqref{discr5} is  the discrete form of  the \textit{mechanics subproblem} with Galerkin finite element method.  Together, these subproblems make up the  nonlinear and fully coupled  discrete version of the \textit{thermo-poroelastic problem} to be solved iteratively in the next section. 
\begin{remark}[Other schemes]
 The results we present  are valid also for other choices of temporal discretizations, as well as different (i.e., non-mixed) formulations for the heat and flow problems. Different spatial discretizations can even be chosen for each of the three subproblems, although we do not pursue this topic further.
 \end{remark}
 \begin{remark}[Convective coupling term]
 The convective coupling term $(\bw^{n}_h \cdot \bth^{-1} \br^{n}_h, S_h)$ can also be approximated by $(\bw^{n,M}_h \cdot \bth^{-1} \br^{n,R}_h, S_h)$, where two different constants $M$ and $R$ are used  in the definitions~\eqref{convective_term_approx}. In that case, the underlying iterative methods of Section~\ref{sec:algos} as well as the convergence analysis of Section~\ref{sec:conv} remains true with minor modifications in the proofs.
 \end{remark}
\section{The   $L$-type iterative schemes}\label{sec:algos}
We now present six iterative (splitting) algorithms for the discrete thermo-poroelastic problem \eqref{discretevar}. These algorithms involve either decoupling all the subproblems and solving each separately at every iteration (three-step algorithm), or decoupling only one subproblem from the other two which are then solved together (two-step algorithm), or solving a linearized problem monolithically at every iteration (one-step algorithm). We use the letters \textbf{H} (Heat), \textbf{F} (Flow), and \textbf{M} (Mechanics), to abbreviate the algorithms, \textit{e.g.} a two-step algorithm where the heat and flow subproblems are solved together decoupled from the mechanics subproblem is referred to as (\textbf{HF-M}), and similarly for other combinations of coupling/decoupling of the subproblems. Throughout the rest of the article we will mostly refer to the discrete problems, and therefore omit the $h$-subscript on the variables and test functions for cleaner notation. We shall also denote the time step simply by $\tau$, keeping in mind it may depend on $n$.

At the time step $n \geq 1$, let $(T^{n-1}, \br^{n-1}, p^{n-1}, \bw^{n-1}, \bu^{n-1})$ be given. We then approximate the solution  at the actual time step $n\in\{1,\cdots,N\}$, using the  sequence  $(T^{n,i}, \br^{n,i}, p^{n,i}, \bw^{n,i},\bu^{n,i})$ for $i \geq 0$, defined in an iterative fashion,  and where the iterate   $(T^{n,0}, \br^{n,0}, p^{n,0}, \bw^{n,0},\bu^{n,0})$ is an initial  guess. All the algorithms involve adding the stabilization terms $L_T(T^{n,i} - T^{n,i-1}, S)$ and $L_p(p^{n,i} - p^{n,i-1},q)$ to the left hand sides of equations \eqref{discr1} and \eqref{discr3}, respectively, where $L_T,L_p > 0$ are the stabilization parameters (to be chosen later). Furthemore, to make the notation easier, we introduce the parametrized fluid and heat content functionals: for a given $L_T,L_p > 0$, we define
\bse\begin{alignat}{2}
& \psi_{L_{T}}(p,\bu,T):=(a_0+ L_{p})T - b_0p + \beta \nabla \cdot \bu,\\
&\phi_{L_{p}}(p,\bu,T):=(c_0+ L_{T})p - b_0T + \alpha \nabla \cdot \bu.
\end{alignat}\ese

For the analysis of the coupled mixed~\eqref{discretevar} and  the underlying iterative approach introduced in this section, we need to introduce  the cut-off operator $\Mcal$ as described  in \textit{e.g.}~\cite{MR2116915, MR2039576} by
\begin{equation}\label{cutoffoperator}
\Mcal(\bz)(x) := 
\begin{cases}
\bz(x), &|\bz(x)| \leq M,\\
M\bz(x) / |\bz(x)|, &|\bz(x)| > M,
\end{cases}
\end{equation}
where $M$ is a large positive constant. The notation $(\bw^{n,M}_h,\br^{n,R}_h)$ used in Definition~\ref{def:discretevar} is then equivalent to  $\bw^{n,M}_h$. Note that the use of $(\bw^{n,M}_h,\br^{n,R}_h)$ instead of $(\bw^{n}_h,\br^{n}_h)$  has little or no practical implications, but is necessary in order to facilitate the convergence analysis; obviously,  if the exact fluxes are bounded, i.e.,  ${\bw}^n,{\br}^n\,\in (L^\infty(\Omega))^d$, then  if we picked $M$ large enough, we have practically  $\Mcal({\bw}^n)(x) = {\bw}^n(x)$ and $\Mcal({\br}^n)(x) = {\br}^n(x)$.
The iterative algorithms then reads as follows:
\subsection{The  monolithic scheme (HFM)}\label{sect:algo1}
At the each iteration $i>0$ of the $L$-type monolothic scheme, we solve the  linearized thermo-poroelastic problem: given $(T^{n,i-1}, p^{n,i-1}, \bw^{n,i-1}, \bu^{n,i-1})$,  find $(T^{n,i}, \br^{n,i}, p^{n,i}, \bw^{n,i}, \bu^{n,i})$ such that
\bse\label{Lscheme_mono}
	\begin{alignat}{2}
		\nonumber &(\psi_{L_{T}}(T^{n,i},p^{n,i},\bu^{n,i}), S)
		+ \tau c_{f}(\Mcal(\bw^{n,i-1}) \cdot \bth^{-1}\Mcal( \br^{n,i}), S)  \\
		 &\qquad \qquad\qquad  + \tau(\nabla \cdot \br^{n,i},S)= \tau(z^{n},S) + (\psi(T^{n-1},p^{n-1},\bu^{n-1}), S)+L_T(T^{n,i-1},S), \qquad 
		&&\forall S \in \Tcal_h, \\
		&(\bth^{-1} \br^{n,i}, \by) - (T^{n,i}, \nabla \cdot \by) = 0, \quad
		&&\forall \by \in \Rcal_h, \\
		 \nonumber&(\phi_{L_{p}}(T^{n,i},p^{n,i},\bu^{n,i}), q)
		+ \tau(\nabla \cdot \bw^{n,i},q) \\
		 &\qquad \qquad = \tau (g^{n},q) + 
		(\phi(T^{n-1},p^{n-1},\bu^{n-1}), q) + L_{p}(p^{n,i-1},q), \hfill
		&&\forall q \in \Pcal_h, \\
		&(\bK^{-1} \bw^{n,i}, \bz) - (p^{n,i},\nabla \cdot \bz) = 0, \hfill 
		&&\forall \bz \in \Wcal_h,\\
		&2\mu(\bep(\bu^{n,i}),\bep(\bv)) + \lambda (\nabla \cdot  \bu^{n,i}, \nabla \cdot \bv) -  (\beta T^{n,i} +\alpha p^{n,i}, \nabla \cdot \bv)= (\bff^{n},\bv), 
		&&\forall \bv \in \Ucal_h.
	\end{alignat}
\ese
This algorithm in continued until a fixed tolerance is reached. Clearly, in the  above algorithm,  the $L$-scheme acts only as a linearization procedure, where we approximate
the  convective transport term by $\Mcal(\bw^{n,i-1}) \cdot \bth^{-1}\Mcal( \br^{n,i})$. Note that, one can also approximate this term by $\Mcal(\bw^{n,i}) \cdot \bth^{-1}\Mcal( \br^{n,i-1})$, and the
analysis presented next remains true and follows exactly the same lines.
 The complexity in this algorithm is that it requires solving 
 a large system generated by~\eqref{Lscheme_mono}, which   combines equations varied in type, and this is at each iteration $i>1$. Thus, encouraging the development of efficient techniques for  the resolution of these coupled systems.
\subsection{The  partially decoupled schemes}\label{subsec:algo_two_levels}
In the second set of iterative schemes, we only decouple  either the flow (\textnormal{\textbf{F}}) or mechanics (\textnormal{\textbf{M}}) or  heat (\textnormal{\textbf{H}})  from   the remaining two processes, which are being solved monolithically. Thus,  we transform the monolithic solver (\textnormal{\textbf{HFM}})  into  a \textit{two-level} iterative approach in which  two  simpler 
subproblems are  solved sequentially. This setting delivers the following three iterative approaches.
\subsubsection{(HF-M): coupled heat/flow }\label{sect:algo2}
Decoupling the mechanics calculation from the coupled flow and heat flow calculation, the first \textit{two-level} iterative  scheme reads as follows: at the iteration $i>1$, do:

\qquad \textbullet \ \textbf{Step 1}: Given $(T^{n,i-1}, p^{n,i-1}, \bw^{n,i-1}, \bu^{n,i-1})$, find $(T^{n,i}, \br^{n,i}, p^{n,i}, \bw^{n,i})$ such that
\bse\label{coupled_heatflow}
	\begin{alignat}{2}
		\nonumber &(\psi_{L_{T}}(T^{n,i},p^{n,i},\bu^{n,i-1}), S)
		+ \tau c_{f}(\Mcal(\bw^{n,i-1}) \cdot \bth^{-1}\Mcal( \br^{n,i}), S)  \\
		 &\qquad  + \tau(\nabla \cdot \br^{n,i},S)= \tau(z^{n},S) +(\psi(T^{n-1},p^{n-1},\bu^{n-1}), S)+L_T(T^{n,i-1},S), \qquad 
		&&\forall S \in \Tcal_h, \label{scheme1}\\
		&(\bth^{-1} \br^{n,i}, \by) - (T^{n,i}, \nabla \cdot \by) = 0, \quad
		&&\forall \by \in \Rcal_h, \\
		\nonumber &(\phi_{L_{p}}(T^{n,i},p^{n,i},\bu^{n,i-1}), q)
		+ \tau(\nabla \cdot \bw^{n,i},q) = \tau(g^{n},q)\\
		&\qquad \qquad\qquad \qquad\qquad\qquad + (\phi(T^{n-1},p^{n-1},\bu^{n-1}),q)+ L_p(p^{n,i-1},q),
		&&\forall q \in \Pcal_h, \\
		&(\bK^{-1} \bw^{n,i}, \bz) - (p^{n,i},\nabla \cdot \bz) = 0, \hfill \quad
		&&\forall \bz \in \Wcal_h.
	\end{alignat}
	
\qquad \textbullet \ \textbf{Step 2}: Given $(p^{n,i}, T^{n,i})$, find the displacement $\bu^{n,i}$ such that		
	\begin{alignat}{2}
		&2\mu(\bep(\bu^{n,i}),\bep(\bv)) + \lambda (\nabla \cdot \bu^{n,i}, \nabla \cdot \bv)= (\bff^{n},\bv) + (\beta T^{n,i} +\alpha p^{n,i}, \nabla \cdot \bv),
		\quad \quad  
		&&\qquad\,\forall \bv \in \Ucal_h.  \label{schemen}
	\end{alignat}
\ese

\subsubsection{(HM-F): coupled heat/mechanics}\label{sect:algo3}
The second   scheme  in this subsection is obtained by decoupling the flow calculation from the remaining coupled thermo-elasticity    calculation. This iterative  scheme reads: at the iteration $i>1$,  do:

\qquad \textbullet \ \textbf{Step 1}: Given $(T^{n,i-1}, p^{n,i-1}, \bw^{n,i-1}, \bu^{n,i-1})$, find $(T^{n,i}, \br^{n,i}, \bu^{n,i})$ such that
\bse
	\begin{alignat}{2}
		\nonumber &(\psi_{L_{T}}(T^{n,i},p^{n,i-1},\bu^{n,i}), S)
		+ \tau c_{f}(\Mcal(\bw^{n,i-1}) \cdot \bth^{-1}\Mcal( \br^{n,i}), S)  \\
		 &\qquad + \tau(\nabla \cdot \br^{n,i},S)= \tau(z^{n},S) + (\psi(T^{n-1},p^{n-1},\bu^{n-1}), S)
		+L_T(T^{n,i-1},S), \qquad 
		&&\forall S \in \Tcal_h,\\
		&(\bth^{-1} \br^{n,i}, \by) - (T^{n,i}, \nabla \cdot \by) = 0, \quad
		&&\forall \by \in \Rcal_h, \\
		\nonumber&2\mu(\bep(\bu^{n,i}),\bep(\bv)) + \lambda (\nabla \cdot \bu^{n,i}, \nabla \cdot \bv)\\
		&\qquad \qquad\qquad - \beta (T^{n,i}, \nabla \cdot \bv)= (\bff^{n},\bv) + \alpha (p^{n,i-1}, \nabla \cdot \bv), 
		\qquad \qquad \qquad \qquad 
		&&\forall \bv \in \Ucal_h.
	\end{alignat}
		
\qquad \textbullet \ \textbf{Step 2}: Given $(T^{n,i}, \bu^{n,i}, p^{n,i-1})$,  find $(p^{n,i}, \bw^{n,i})$ such that		
	\begin{alignat}{2}
		\nonumber &(c_0 + L_p)(p^{n,i}, q) 
		+ \tau(\nabla \cdot \bw^{n,i},q)= \tau(g^{n},q) + (\phi(T^{n-1},p^{n-1},\bu^{n-1}), q) \qquad\\
		&\qquad \qquad+ L_p(p^{n,i-1},q) + b_0(T^{n,i}- \alpha\nabla \cdot \bu^{n,i}, q), \qquad \qquad && \qquad\forall q \in \Pcal_h, \\
		&(\bK^{-1} \bw^{n,i}, \bz) - (p^{n,i},\nabla \cdot \bz) = 0, \qquad \qquad 
		&&\qquad \forall \bz \in \Wcal_h.
	\end{alignat}
\ese

\subsubsection{(FM-H): coupled flow/mechanics}\label{sect:algo4}
The last  \textit{two-level} scheme is obtained by decoupling the poro-elasticity (solved monolithically) calculation from the heat flow. This iterative  scheme reads: at the iteration $i>1$, do:

\qquad \textbullet \ \textbf{Step 1}: Given $(p^{n,i-1}, \bu^{n,i-1}, T^{n,i-1})$, find $(p^{n,i}, \bw^{n,i}, \bu^{n,i})$ such that
\bse
	\begin{alignat}{2}
		\nonumber &(\phi_{L_{p}}(T^{n,i-1},p^{n,i},\bu^{n,i}), q)
		 + \tau(\nabla \cdot \bw^{n,i},q) = \tau(g^{n},q)\\
		&\qquad \qquad\qquad \qquad + (\phi(T^{n-1},p^{n-1},\bu^{n-1}), q)+L_p(p^{n,i-1},q), 
		&&\qquad \qquad \qquad \forall q \in \Pcal_h,\\
		&(\bK^{-1} \bw^{n,i}, \bz) - (p^{n,i}, \nabla \cdot \bz) = 0, \quad
		&&\qquad \qquad \qquad \forall \bz \in \Wcal_h, \\
		\nonumber&2\mu(\bep(\bu^{n,i}),\bep(\bv)) + \lambda (\nabla \cdot \bu^{n,i}, \nabla \cdot \bv) 
		\\
		&\qquad \qquad\qquad \qquad - \alpha (p^{n,i}, \nabla \cdot \bv)= (\bff^{n},\bv) + \beta (T^{n,i-1}, \nabla \cdot \bv), 		
		&&\qquad \qquad \qquad \forall \bv \in \Ucal_h. 
	\end{alignat}
		
\qquad \textbullet \ \textbf{Step 2}: Given $(p^{n,i}, \bw^{n,i}, \bu^{n,i}, T^{n,i-1})$, find $(T^{n,i}, \br^{n,i})$ such that		
	\begin{alignat}{2}
		\nonumber &(a_0 + L_T)(T^{n,i}, S) +\tau c_{f}(\Mcal(\bw^{n,i}) \cdot \bth^{-1}\Mcal( \br^{n,i}), S)
		+ \tau(\nabla \cdot \br^{n,i},S)  \\
		\nonumber &\qquad \qquad= \tau(z^{n},S) + (\psi(T^{n-1},p^{n-1},\bu^{n-1}), S) \\
		&\qquad \qquad\qquad  + L_T(T^{n,i-1},S) + b_0(p^{n,i}, S) - \beta(\nabla \cdot \bu^{n,i}, S), 
		&& \quad\qquad \quad \forall S \in \Tcal_h, \\
		&(\bth^{-1} \br^{n,i}, \by) - (T^{n,i},\nabla \cdot \by) = 0, \hfill 
		&&\quad \qquad \quad \forall \by \in \Rcal_h.
	\end{alignat}
\ese

\subsection{The  fully decoupled schemes}\label{subsec:algo_three_levels}
In this set of iterative coupling schemes, we simply split the three processes, providing three sub-problems to be solved sequentially. Fixing  the mechanics calculation in the third level, two approaches are then derived in  which either the problem of flow or the heat is solved  first followed by solving the other system and then the  mechanics using the already calculated information,   leading  to recover the original solution. These schemes enjoy the solving of much simpler subsystems  through the algorithm, as well as the facility to reuse existing codes for each component of the problem.
\subsubsection{(H-F-M): decoupled heat - flow - mechanics}\label{sect:algo5}

At each iteration all three subproblems are decoupled, and are solved in the order heat $\rightarrow$ flow $\rightarrow$ mechanics. This iterative  scheme reads: at the iteration $i>1$, do:

\bse\label{decoupled1}
\qquad \textbullet \ \textbf{Step 1}: Given $(p^{n,i-1}, \bw^{n,i-1}, T^{n,i-1}, \bu^{n,i-1})$ find $(T^{n,i}, \br^{n,i})$ such that
	\begin{alignat}{2}
		\nonumber &(\psi_{L_{T}}(T^{n,i},p^{n,i-1},\bu^{n,i-1}), S)  
		+ \tau c_{f}(\Mcal(\bw^{n,i}) \cdot \bth^{-1}\Mcal( \br^{n,i}), S)  \\
		 &\qquad  + \tau(\nabla \cdot \br^{n,i},S)= \tau(z^{n},S) + (\psi(T^{n-1},p^{n-1},\bu^{n-1}), S)
		+L_T(T^{n,i-1},S), \qquad  
		&& \forall S \in \Tcal_h, \label{decoupled1_1}\\
		&(\bth^{-1} \br^{n,i}, \by) - (T^{n,i}, \nabla \cdot \by) = 0, 
		&& \forall \by \in \Rcal_h.
	\end{alignat}

\qquad \textbullet \ \textbf{Step 2}: Given $(p^{n,i-1}, T^{n,i}, \bu^{n,i-1})$ find $(p^{n,i}, \bw^{n,i})$ such that
	\begin{alignat}{2}
		\nonumber &(c_0 + L_p)(p^{n,i}, q)   
		+ \tau(\nabla \cdot \bw^{n,i},q) \\
		\nonumber &\qquad \qquad= \tau(g,q) + (\phi(T^{n-1},p^{n-1},\bu^{n-1}), q) \\
		&\qquad \qquad\qquad\quad+ L_p(p^{n,i-1},q) + b_0(T^{n,i}, q) - \alpha(\nabla \cdot \bu^{n,i-1}, q),
		&&\qquad \qquad \qquad\quad \forall q \in \Pcal_h, \\
		&(\bK^{-1} \bw^{n,i}, \bz) - (p^{n,i},\nabla \cdot \bz) = 0, 
		&&  \qquad\qquad \qquad\quad\forall \bz \in \Wcal_h.\label{decoupled1_5}
	\end{alignat}
	
\qquad \textbullet \ \textbf{Step 3}: Given $(p^{n,i}, T^{n,i})$ find $\bu^{n,i}$ such that		
	\begin{alignat}{2}
		&2\mu(\bep(\bu^{n,i}),\bep(\bv)) + \lambda (\nabla \cdot \bu^{n,i}, \nabla \cdot \bv) = (\bff,\bv) + (\beta T^{n,i}+ \alpha p^{n,i}, \nabla \cdot \bv), \qquad \qquad 
		&&   \forall \bv \in \Ucal_h. 
	\end{alignat}
\ese

\subsubsection{(F-H-M): decoupled flow - heat - mechanics}\label{sect:algo6}

At each iteration all three subproblems are decoupled, and are solved in the order flow $\rightarrow$ heat $\rightarrow$ mechanics. This iterative  scheme reads: at iteration $i>1$, do:

\qquad \textbullet \ \textbf{Step 1}: Given $(p^{n,i-1}, T^{n,i-1}, \bu^{n,i-1})$ find $(p^{n,i}, \bw^{n,i})$ such that
\bse\label{decoupled2}
	\begin{alignat}{2}
		\nonumber &(\phi_{L_{p}}(T^{n,i-1},p^{n,i},\bu^{n,i-1}), q)   
		+ \tau(\nabla \cdot \bw^{n,i},q) \\
		 &\qquad \qquad= \tau(g,q) +(\phi(T^{n-1},p^{n-1},\bu^{n-1}), q)  + L_p(p^{n,i-1},q), \qquad \qquad\qquad\quad
		&&\forall q \in \Pcal_h, \label{decoupled2_1}\\
		&(\bK^{-1} \bw^{n,i}, \bz) - (p^{n,i},\nabla \cdot \bz) = 0, \qquad \qquad 
		&& \forall \bz \in \Wcal_h.
	\end{alignat}
	
\qquad \textbullet \ \textbf{Step 2}: Given $(p^{n,i}, \bw^{n,i}, T^{n,i-1}, \bu^{n,i-1})$, find $(T^{n,i}, \br^{n,i})$ such that
	\begin{alignat}{2}
		\nonumber &(a_0 + L_T)(T^{n,i}, S)  
		+ \tau c_{f}(\Mcal(\bw^{n,i}) \cdot \bth^{-1}\Mcal( \br^{n,i}), S) + \tau(\nabla \cdot \br^{n,i},S)\qquad\qquad \\
		\nonumber &\qquad \qquad= \tau(h,S) + (\psi(T^{n-1},p^{n-1},\bu^{n-1}), S)\\
		&\qquad\qquad \qquad+L_T(T^{n,i-1},S) + b_0(p^{n,i},S) - \beta(\nabla \bu^{n,i-1}, S), \qquad \qquad \qquad
		&&\forall S \in \Tcal_h,\\
		&(\bth^{-1} \br^{n,i}, \by) - (T^{n,i}, \nabla \cdot \by) = 0,
		&& \forall \by \in \Rcal_h.\label{decoupled2_5}
	\end{alignat}
	
\qquad \textbullet \ \textbf{Step 3}: Given $(p^{n,i}, T^{n,i})$, find $\bu^{n,i}$ such that		
	\begin{alignat}{2}
		&2\mu(\bep(\bu^{n,i}),\bep(\bv)) + \lambda (\nabla \cdot \bu^{n,i}, \nabla \cdot \bv) = (\bff^{n},\bv) + (\beta T^{n,i} +\alpha p^{n,i}, \nabla \cdot \bv),\qquad
		&&\quad\forall \bv \in \Ucal_h.  
	\end{alignat}
\ese

\section{Convergence analysis}\label{sec:conv}
The starting point for our analysis is the existence and uniqueness of a solution to~\eqref{discretevar}. To this aim, we will make use of the following Lemma (cf.~\cite{MR2116915}), stating the   Lipschitz property of the cut-off operator $\Mcal$:
\begin{lemma}[Property of $\Mcal$]
The \textquotedblleft cut-off \textquotedblright operator $\Mcal$ defined
as in equation~\eqref{cutoffoperator} is uniformly Lipschitz continuous,
\begin{equation}
\norm{\Mcal(\bz_1) - \Mcal(\bz_2)}_{(L^\infty(\Omega))^d}
\leq \norm{\bz_1 - \bz_2}_{(L^\infty(\Omega))^d}.
\end{equation}
\end{lemma}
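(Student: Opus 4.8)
The plan is to prove the pointwise estimate $|\Mcal(\bz_1)(x) - \Mcal(\bz_2)(x)| \leq |\bz_1(x) - \bz_2(x)|$ for every $x \in \Omega$, since taking the essential supremum over $x$ then yields the claimed $(L^\infty(\Omega))^d$ bound. Because $\Mcal$ acts pointwise, the problem reduces to a statement in $\real^d$: for any two vectors $\bz_1,\bz_2 \in \real^d$, the radial truncation map $P_M:\real^d\to\real^d$ defined by $P_M(\bz)=\bz$ if $|\bz|\le M$ and $P_M(\bz)=M\bz/|\bz|$ if $|\bz|>M$ satisfies $|P_M(\bz_1)-P_M(\bz_2)|\le|\bz_1-\bz_2|$. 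This is the statement that the nearest-point projection onto the closed ball $\overline{B(0,M)}\subset\real^d$ is $1$-Lipschitz, which is a standard fact since that ball is closed and convex. First I would identify $P_M$ explicitly as $P_M(\bz) = \bz$ for $\bz$ in the ball and as the projection onto the sphere otherwise, observing that in either case $P_M(\bz)$ is the unique minimizer of $|\bw-\bz|$ over $\bw\in\overline{B(0,M)}$.

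Then I would invoke the variational characterization of the projection: for a closed convex set $C$ and its projection $\pi_C$, one has $(\bz - \pi_C(\bz))\cdot(\bw - \pi_C(\bz)) \le 0$ for all $\bw\in C$. Applying this with $C = \overline{B(0,M)}$, $\bz=\bz_1$, $\bw=\pi_C(\bz_2)$ gives $(\bz_1-\pi_C(\bz_1))\cdot(\pi_C(\bz_2)-\pi_C(\bz_1))\le 0$, and symmetrically $(\bz_2-\pi_C(\bz_2))\cdot(\pi_C(\bz_1)-\pi_C(\bz_2))\le 0$. Adding these two inequalities and rearranging yields $|\pi_C(\bz_1)-\pi_C(\bz_2)|^2 \le (\bz_1-\bz_2)\cdot(\pi_C(\bz_1)-\pi_C(\bz_2))$, and then Cauchy--Schwarz on the right-hand side gives $|\pi_C(\bz_1)-\pi_C(\bz_2)| \le |\bz_1-\bz_2|$, exactly the pointwise bound needed.

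Alternatively, if one prefers to avoid citing the projection machinery, the same pointwise inequality can be checked by a short case analysis: (i) if $|\bz_1|,|\bz_2|\le M$ it is an equality; (ii) if both exceed $M$, one must show $|M\bz_1/|\bz_1| - M\bz_2/|\bz_2|| \le |\bz_1-\bz_2|$, which follows from the fact that $\bz\mapsto M\bz/|\bz|$ is $1$-Lipschitz away from the origin — concretely one expands both sides and uses $|\bz_1||\bz_2| \ge \bz_1\cdot\bz_2$; (iii) the mixed case, say $|\bz_1|\le M<|\bz_2|$, reduces to (ii)-type estimates by inserting the point $M\bz_2/|\bz_2|$ and the fact that $M/|\bz_2|\le 1$. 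I expect the mixed case (iii) to be the only mildly delicate point in the elementary route — it requires observing that moving $\bz_2$ radially inward to the sphere cannot increase its distance to the interior point $\bz_1$ — whereas the projection argument handles all cases uniformly and is the cleaner write-up; either way the argument is short and the result, since it follows the reference \cite{MR2116915}, needs only a brief indication rather than a full treatment.
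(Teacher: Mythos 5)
The paper does not actually prove this lemma: it states it with a bare citation to \cite{MR2116915} and moves on, so there is no in-paper argument to compare against. Your proposal is correct and complete. The key observation --- that $\Mcal$ acts pointwise as the nearest-point projection onto the closed convex ball $\overline{B(0,M)}\subset\real^d$, hence is non-expansive by the standard variational characterization $(\bz-\pi_C(\bz))\cdot(\bw-\pi_C(\bz))\le 0$, after which the $(L^\infty(\Omega))^d$ bound follows by taking the essential supremum of the pointwise estimate --- is exactly the right reduction, and your derivation of $|\pi_C(\bz_1)-\pi_C(\bz_2)|^2\le(\bz_1-\bz_2)\cdot(\pi_C(\bz_1)-\pi_C(\bz_2))$ followed by Cauchy--Schwarz is the cleanest write-up. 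Your elementary fallback is also sound: in case (ii) the inequality $a^2+b^2-2ab\cos\theta\ge 2ab(1-\cos\theta)\ge 2M^2(1-\cos\theta)$ for $a,b\ge M$ does the job, and you correctly flag the mixed case as the only point requiring a separate observation. Given that the paper treats this as a known fact, your level of detail is if anything more than is needed, but nothing in the argument is missing or wrong.
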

Thus, we have 
\bse\begin{alignat}{2}
\norm{\Mcal({\bw}^{n}) - \Mcal(\bw^{n,i})}_{(L^\infty(\Omega))^d}
\leq \norm{{\bw}^n - \bw^{n,i}}_{(L^\infty(\Omega))^d}, 
\end{alignat}
and 
\begin{alignat}{2}
\norm{\Mcal(\bw^n)}_{(L^\infty(\Omega))^d}
\leq M. 
\end{alignat}\ese
The proof of the next  Theorem is based on showing that  the scheme~\eqref{Lscheme_mono} is a contraction, and then by applying the Banach fixed-point theorem~\cite{cheney2013analysis}, to deduce convergence of the scheme. In what follows we will frequently use the following polarization and binomial identities,
\begin{equation}\label{identities}
4(u,v) = \norm{u+v}^2 - \norm{u-v}^2, \quad \textnormal{ and } \quad
2(u-v,u) = \norm{u}^2 + \norm{u-v}^2 - \norm{v}^2.
\end{equation}
Finally, we define the difference functions between the solutions at the iteration $i$ and $i-1$ of problem~\eqref{Lscheme_mono}, respectively as 
\begin{equation}\label{diffdef}
(e_T^{i}, \be_\br^{i}, e_p^{i}, \be_\bw^{i}, \be_\bu^{i}) := (T^{n,i}-T^{n,i-1}, \br^{n,i} - \br^{n,i-1}, p^{n,i}-p^{n,i-1}, \bw^{n,i} - \bw^{n,i-1}, \bu^{n,i}-\bu^{n,i-1}). 
\end{equation}
With this, we state the first of our main results:
\begin{theorem}[Convergence of the monolithic $L$-scheme~\textnormal{\textbf{HFM}}]\label{thm:wellposedness}Assuming that~\textnormal{\Assum}  hold true, and the time step is  small enough, i.e. $\tau < \dfrac{2(a_{0}-b_{0})}{c_f^2 M^2\left(\dfrac{k_M}{\theta_m} + 1\right) -  \dfrac{\theta_m}{4c_{\Omega,d}}}$, then, the monolithic $L$-scheme~\textnormal{\textbf{HFM}} (Algorithm~\ref{sect:algo1}) defines a contraction satisfying
\begin{align}
\nonumber &\left(a_0 -b_0 + \frac{L_T}{2} + \frac{\tau\theta_m}{4c_{\Omega,d}} - \frac{\tau c_f^2 M^2}{2}\left(\frac{k_M}{\theta_m} + 1 \right)\right) \norm{e_T^{i}}^2  +\left(c_0 -b_0 + \frac{L_p}{2} \right) \norm{e_p^{i}}^2\\
\nonumber&\qquad\quad\qquad  + \tau \norm{\be_{\bw}^{i}}^2_{\bK^{-1}} +\frac{\tau}{2} \norm{\be_{\br}^{i}}^2_{\bth^{-1}}+2\mu \norm{\bep(\be_{\bu}^{i})}^2 + \lambda \norm{\nabla \cdot \be_{\bu}^{i}}^2\\
&\quad\qquad\quad\quad\qquad\leq \frac{L_T}{2} \norm{e_T^{i-1}}^2 + \frac{L_p}{2} \norm{e_p^{i-1}}^2 
+ \frac{\tau}{2} \norm{\be_{\bw}^{i-1}}^2_{\bK^{-1}}. 
\end{align}
Therefrom, the limit is the unique solution of the problem~\eqref{discretevar}.
\end{theorem}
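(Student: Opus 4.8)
The plan is to subtract consecutive iterates of the monolithic scheme~\eqref{Lscheme_mono} to derive an equation system for the difference functions $(e_T^{i}, \be_\br^{i}, e_p^{i}, \be_\bw^{i}, \be_\bu^{i})$ defined in~\eqref{diffdef}, then test each equation with a carefully chosen test function (essentially the difference itself), sum the resulting identities, and absorb all cross-terms into the left-hand side using the polarization and binomial identities~\eqref{identities}, the coercivity assumptions~\ref{Kdef}--\ref{thdef}, and a Poincar\'e-type inequality (this is where the constant $c_{\Omega,d}$ enters, bounding $\norm{e_T^i}$ by $\norm{\nabla \cdot \be_\br^i}$ or by $\norm{\be_\br^i}_{\bth^{-1}}$ via the mixed structure and~\eqref{divcondition}). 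First I would write the five difference equations: testing the heat equation with $S = e_T^i$, the $\br$-equation with $\by = \tau\be_\br^i$, the flow equation with $q = e_p^i$, the $\bw$-equation with $\bz = \tau\be_\bw^i$, and the mechanics equation with $\bv = \be_\bu^i$. The terms $(\psi_{L_T}(\cdots), e_T^i)$ and $(\phi_{L_p}(\cdots), e_p^i)$ expand, using the definitions of $\psi_{L_T},\phi_{L_p}$, into $(a_0+L_p)\norm{e_T^i}^2$, $(c_0+L_T)\norm{e_p^i}^2$, the off-diagonal coupling $-b_0(e_p^i,e_T^i)-b_0(e_T^i,e_p^i)$, and the mechanics-coupling terms $\beta(\nabla\cdot\be_\bu^i,e_T^i)$ and $\alpha(\nabla\cdot\be_\bu^i,e_p^i)$.

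The key cancellations are: the $\beta$ and $\alpha$ divergence-coupling terms from the heat and flow equations cancel exactly against the $(\beta e_T^i + \alpha e_p^i,\nabla\cdot\be_\bu^i)$ term from the mechanics equation, leaving the positive elastic energy $2\mu\norm{\bep(\be_\bu^i)}^2 + \lambda\norm{\nabla\cdot\be_\bu^i}^2$ on the left; the mixed pairs $(\nabla\cdot\be_\br^i,e_T^i)$ and $-(e_T^i,\nabla\cdot\be_\br^i)$ (and similarly for $\bw,p$) cancel after the substitutions above, converting the $\br$- and $\bw$-equations into $\tau\norm{\be_\br^i}^2_{\bth^{-1}}$ and $\tau\norm{\be_\bw^i}^2_{\bK^{-1}}$; the $-b_0$ cross-term together with $a_0\norm{e_T^i}^2 + c_0\norm{e_p^i}^2$ is lower-bounded via Young's inequality (or directly, using $a_0-b_0>0$, $c_0-b_0>0$ from~\ref{constraints}) by $(a_0-b_0)\norm{e_T^i}^2 + (c_0-b_0)\norm{e_p^i}^2$. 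The stabilization terms give $L_T(e_T^i, e_T^i - \text{(shift)})$, i.e.\ $L_T(e_T^{i} - e_T^{i-1}$-type structure: more precisely the right-hand side contribution $L_T(T^{n,i-1},S)$ minus its predecessor yields $L_T(e_T^{i-1},e_T^i)$, which with the $L_T\norm{e_T^i}^2$ from $\psi_{L_T}$ and the binomial identity $2(e_T^{i-1},e_T^i) \le \norm{e_T^i}^2 + \norm{e_T^{i-1}}^2$... actually more sharply $L_T\norm{e_T^i}^2 - L_T(e_T^{i-1},e_T^i) \ge \tfrac{L_T}{2}\norm{e_T^i}^2 - \tfrac{L_T}{2}\norm{e_T^{i-1}}^2$) produces the $\tfrac{L_T}{2}\norm{e_T^i}^2$ on the left against $\tfrac{L_T}{2}\norm{e_T^{i-1}}^2$ on the right, and analogously for $L_p$.

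The main obstacle is the nonlinear convective term: the difference $\tau c_f\bigl[(\Mcal(\bw^{n,i-1})\cdot\bth^{-1}\Mcal(\br^{n,i}), e_T^i) - (\Mcal(\bw^{n,i-2})\cdot\bth^{-1}\Mcal(\br^{n,i-1}), e_T^i)\bigr]$ must be split (add and subtract $(\Mcal(\bw^{n,i-1})\cdot\bth^{-1}\Mcal(\br^{n,i-1}),e_T^i)$) into one piece involving $\Mcal(\br^{n,i}) - \Mcal(\br^{n,i-1})$ and one involving $\Mcal(\bw^{n,i-1}) - \Mcal(\bw^{n,i-2})$. Using the Lipschitz Lemma for $\Mcal$, the bound $\norm{\Mcal(\bw^{n,i-1})}_{(L^\infty)^d}\le M$, and $\norm{\Mcal(\br^{n,i})-\Mcal(\br^{n,i-1})} \le \norm{\be_\br^i}$ (the Lipschitz bound in $L^2$, which follows from the pointwise $1$-Lipschitz property), these are controlled by $\tau c_f M(\norm{\be_\br^i}_{\bth^{-1}} + \norm{\be_\bw^{i-1}}_{\bK^{-1}})\norm{e_T^i}$ up to the norm-equivalence constants $\theta_m,k_M$; then a weighted Young's inequality distributes these against the $\tfrac{\tau}{2}\norm{\be_\br^i}^2_{\bth^{-1}}$ reserve, the $\tfrac{\tau}{2}\norm{\be_\bw^{i-1}}^2_{\bK^{-1}}$ term that appears on the right-hand side of the contraction, and a $\tfrac{\tau c_f^2 M^2}{2}(\tfrac{k_M}{\theta_m}+1)\norm{e_T^i}^2$ term that must be absorbed into the $(a_0-b_0)\norm{e_T^i}^2$ coefficient — this absorption is exactly what forces the stated smallness condition on $\tau$, with the Poincar\'e term $\tfrac{\tau\theta_m}{4c_{\Omega,d}}\norm{e_T^i}^2$ (recovered from $\tfrac{\tau}{2}\norm{\be_\br^i}^2_{\bth^{-1}}$ via the mixed inf-sup/Poincar\'e estimate) helping to enlarge the admissible range. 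Once the full inequality is assembled in the stated form and the $\tau$-condition guarantees the coefficient of $\norm{e_T^i}^2$ on the left strictly exceeds $\tfrac{L_T}{2}$ (while the other left coefficients dominate their right counterparts — here one may need to choose $L_T,L_p$ large enough, or observe the contraction in the product space with a suitable weighted norm), the map is a contraction; Banach's fixed-point theorem then gives a unique fixed point, which by construction solves~\eqref{discretevar}, and passing to the limit $i\to\infty$ identifies it as that solution.
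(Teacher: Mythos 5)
Your proposal is correct and follows essentially the same route as the paper's proof: difference equations for consecutive iterates, the same test functions, the add-and-subtract splitting of the convective term combined with the Lipschitz and $L^\infty$ bounds on $\Mcal$, Young's inequality with the weights that produce the $\frac{\tau c_f^2M^2}{2}(\frac{k_M}{\theta_m}+1)$ coefficient, and the inf-sup/Thomas-lemma estimate $\frac{\theta_m}{c_{\Omega,d}}\norm{e_T^i}^2\le\norm{\be_\br^i}^2_{\bth^{-1}}$ to recover the $\frac{\tau\theta_m}{4c_{\Omega,d}}$ term, concluding with Banach's fixed-point theorem. One small remark: no largeness condition on $L_T,L_p$ is actually needed here (unlike for the partially decoupled schemes), since the contraction holds in the weighted product norm for any $L_T,L_p>0$ once the time-step condition is imposed, as your parenthetical alternative correctly anticipates.
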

\begin{remark}[Bound on time step]
Note that $a_0 - b_0 > 0$ due to the Assumption~\ref{constraints}, and $c_f^2M^2\left(\dfrac{k_M}{\theta_m} + 1\right) - \dfrac{\theta_m}{4c_{\Omega,d}} > 0$ by the choice of $M$ large enough.  
\end{remark}
\begin{proof}

We begin by deriving the error equations satisfied by $(e_T^{i},\be_\br^{i},e_p^{i},\be_\bw^{i},\be_\bu^{i})$, i.e. subtract the equations~\eqref{Lscheme_mono} for $i$ from the ones for $i-1$, and obtain
\bse
	\begin{alignat}{2}
		 \nonumber &(\psi_{L_{T}}(e_T^{i},e_p^{i},\be_\bu^{i}), S) + \tau(\nabla \cdot \be_\br^{n},S) + \tau c_f(\Mcal(\bw^{n,i-1})  \cdot \bth^{-1} [\Mcal(\br^{n,i}) - \Mcal(\br^{n,i-1})], S)\\
		 &\qquad\qquad+ \tau c_f([\Mcal(\bw^{n,i-1}) - \Mcal(\bw^{n,i-2})] \cdot \bth^{-1} \Mcal(\br^{n,i}), S)  = L_T(e_T^{i-1},S), \qquad
		&&\forall S \in \Tcal_h, \label{emon1}\\
		&(\bth^{-1} \be_\br^{i}, \by) - (e_T^{i}, \nabla \cdot \by) = 0, \quad
		&&\forall \by \in \Rcal_h, \label{emon2}\\
		 &(\phi_{L_{p}}(e_T^{i},e_p^{i},\be_\bu^{i}), q)
		+ \tau(\nabla \cdot \be_\bw^{i},q)  = L_{p}(e_p^{i-1},q), \hfill
		&&\forall q \in \Pcal_h, \\
		&(\bK^{-1} \be_\bw^{i}, \bz) - (e_p^{i},\nabla \cdot \bz) = 0, \hfill 
		&&\forall \bz \in \Wcal_h,\\
		&2\mu(\bep(\be_\bu^{i}),\bep(\bv)) + \lambda (\nabla \cdot  \be_\bu^{i}, \nabla \cdot \bv) -  (\beta e_T^{i} +\alpha e_p^{i}, \nabla \cdot \bv)= 0,
		&&\forall \bv \in \Ucal_h.\label{emon5}
	\end{alignat}
\ese
We choose now $S = e_T^{i},\, \by = \tau \be_{\br}^{i},\, q = e_p^{i},\, \bz = \tau \be_{\bw}^{i},$ and $\bv = \be_{\bu}^{i}$ as test functions in equations \eqref{emon1}-- \eqref{emon5}, respectively. Then, summing the  resulting equations and using the identity~\eqref{identities} together with applying Cauchy-Schwarz and Young inequalities    and  some algebraic manipulations, we get, for any $\epsilon_1, \epsilon_2 > 0$, 
\begin{align}
\nonumber &\left(a_0 -b_0 + \frac{L_T}{2} \right) \norm{e_T^{i}}^2 + \tau \norm{\be_{\br}^{i}}^2_{\bth^{-1}}
+\left(c_0 -b_0 + \frac{L_p}{2} \right) \norm{e_p^{i}}^2 + \tau \norm{\be_{\bw}^{i}}^2_{\bK^{-1}} +2\mu \norm{\bep(\be_{\bu}^{i})}^2 + \lambda \norm{\nabla \cdot \be_{\bu}^{i}}^2\\
\nonumber&\quad\leq \frac{L_T}{2} \norm{e_T^{i-1}}^2 + \frac{L_p}{2} \norm{e_p^{i-1}}^2 + \tau c_f \norm{\Mcal(\bw^{i-1}) \cdot \bth^{-1} \be_{\br}^{i}} \norm{e_T^{i}}+\tau c_f \norm{\be_{\bw}^{i-1} \cdot \bth^{-1} \Mcal(\br^{i-1})} \norm{e_T^{i}}\\
&\quad\leq \frac{L_T}{2} \norm{e_T^{i-1}}^2 + \frac{L_p}{2} \norm{e_p^{i-1}}^2
+ \tau c_f M\left(\frac{\epsilon_1}{2} + \frac{\epsilon_2}{2} \right) \norm{e_T^{i}}^2+ \tau c_f M \frac{1}{2\epsilon_1}\norm{\be_{\br}^{i}}^2_{\bth^{-1}}
+ \tau c_f M \frac{k_M}{\theta_m} \frac{1}{2\epsilon_2} \norm{\be_{\bw}^{i-1}}^2_{\bK^{-1}}. \label{monest1}
\end{align}
From equation \eqref{emon2}, and by Thomas' lemma \cite{thomas1977analyse}, there exists $\hat{\by} \in \Rcal_h$ and a constant $c_{\Omega,d} >0$ depending only on the domain and spatial dimension such that $\nabla \cdot \hat{\by} = e_T^i$ with $\norm{\hat{\by}} \leq c_{\Omega,d} \norm{e_T^i}$. Thus, taking $\hat{\by}$ as a test function in \eqref{itere4} we deduce
\begin{align}
\nonumber \norm{e_T^i}^2 = (e_T^i, \nabla \cdot \hat{\by}) &= (\bth^{-1} \be_{\br}^i, \hat{\by}) \\
\nonumber &\leq \norm{\be_{\br}^i}_{\bth^{-1}} \cdot \frac{1}{\sqrt{\theta_m}} \norm{\hat{\by}} \\
&\leq \norm{\be_{\br}^i}_{\bth^{-1}} \cdot \frac{c_{\Omega,d}}{\sqrt{\theta_m}} \norm{e_T^i}, 
\end{align}
which leads to
\begin{equation}
\frac{\theta_m}{c_{\Omega,d}} \norm{e_T^i}^2 \leq \norm{\be_{\br}^i}^2_{\bth^{-1}}.\label{thomasest}
\end{equation}
Replacing \eqref{thomasest} in \eqref{monest1} while choosing $\epsilon_1 = c_fM$ and $\epsilon_2 = c_f M k_M/\theta_m$, we obtain
\begin{align}
\nonumber &\left(a_0 -b_0 + \frac{L_T}{2} + \frac{\tau \theta_m}{4c_{\Omega,d}} - \frac{\tau c_f^2 M^2}{2}\left(\frac{k_M}{\theta_m} + 1\right)\right) \norm{e_T^{i}}^2+2\mu \norm{\bep(\be_{\bu}^{i})}^2 + \lambda \norm{\nabla \cdot \be_{\bu}^{i}}^2  \\
\nonumber&\qquad +\left(c_0 -b_0 + \frac{L_p}{2} \right) \norm{e_p^{i}}^2 + \tau \norm{\be_{\bw}^{i}}^2_{\bK^{-1}} +\frac{\tau}{4} \norm{\be_{\br}^{i}}^2_{\bth^{-1}}\\
&\quad\qquad\quad\leq \frac{L_T}{2} \norm{e_T^{i-1}}^2 + \frac{L_p}{2} \norm{e_p^{i-1}}^2 
+ \frac{\tau}{2} \norm{\be_{\bw}^{i-1}}^2_{\bK^{-1}}. \label{monolithicestimate}
\end{align}
The contraction of the residuals follows if the time step $\tau$ is small enough, i.e.,
\begin{equation}
\tau < \dfrac{2(a_{0}-b_{0})}{c_f^2 M^2\left(\dfrac{k_M}{\theta_m} + 1\right) -  \dfrac{\theta_m}{4c_{\Omega,d}}},
\end{equation}
where $c_{\Omega,d}$ is not bounded from above, and can thus always be chosen such that the denominator in the above is positive. This proves the convergence of the monolithic $L$-scheme. The limit is then the unique solution of~\eqref{discretevar}.
\end{proof}
The well-posedness of the discrete variational problem~\eqref{discretevar} is established by the  Theorem~\ref{thm:wellposedness}, where the 
solution at time $t^n$, $n\leq0$, is denoted  by 
$(T^n, \br^n, p^n, \bw^n, \bu^n)$. Thus,  we can now  prove the convergence of the decoupled schemes to this solution. We begin with analyzing the partially decoupled schemes, introduced in Subsection~\ref{subsec:algo_two_levels}. To this end, we let the difference functions defined in~\eqref{diffdef} now be the differences between the solutions at the iteration $i$ of problem~\eqref{coupled_heatflow}, and the solutions to~\eqref{discretevar}, i.e.
\begin{equation}\label{diffdef2}
(e_T^{i}, \be_\br^{i}, e_p^{i}, \be_\bw^{i}, \be_\bu^{i}) := (T^{n,i}-T^{n}, \br^{n,i} - \br^{n}, p^{n,i}-p^{n}, \bw^{n,i} - \bw^{n}, \bu^{n,i}-\bu^{n}). 
\end{equation}
The second of our main results is given through
\begin{theorem}[Convergence of the partially  decoupled schemes]\label{mainthm} Assuming that~\textnormal{\Assum}  hold true,  the stabilization parameters are such that 
\begin{equation}\label{Lbound}
L_p \geq \dfrac{4\alpha^2}{3(\frac{2\mu}{d} + \lambda)}, \quad \textnormal{ and } \quad
L_T \geq \dfrac{4\beta^2}{3(\frac{2\mu}{d} + \lambda)},
\end{equation}
and the  time step is small enough such that  $\tau < \dfrac{2(a_0 - b_0)}{c_f^2 M^2(\dfrac{k_M}{\theta_m} + 1) - \dfrac{\theta_m}{4c_{\Omega,d}}}$. Then, the partially decoupled L-scheme~\textnormal{\textbf{HF-M}} (Algorithm~\ref{sect:algo2}) is a contraction  given by
\begin{alignat}{2}
\nonumber& \left(a_0 -  b_0 + \frac{L_T}{2} + \frac{\tau \theta_m}{4c_{\Omega,d}}- \frac{\tau c_f^2M^2}{2}\left(\frac{k_M}{\theta_m} + 1\right)\right) \norm{e_T^i}^2 
\\
\nonumber&\qquad\qquad+\left(c_0 - b_0 + \frac{L_p}{2} \right) \norm{e_p^i}^2  
+ \tau \norm{\be_{\bw}^i}^2_{\bK^{-1}} + \dfrac{\tau}{4} \norm{\be_{\br}^i}^2_{\bth^{-1}}\\
\label{est4}&\qquad\qquad\qquad \leq \frac{L_T}{2}\norm{e_T^{i-1}}^2
+ \frac{L_p}{2}\norm{e_p^{i-1}}^2 +\frac{\tau}{2} \norm{\be_{\bw}^{i-1}}^2_{\bK^{-1}}.
\end{alignat}
Furthemore, there holds, 
\begin{alignat}{2}
\label{est42}&\frac{\mu}{2}\norm{\bep(\be_{\bu}^i)}^2 + \frac{\lambda}{4} \norm{\nabla \cdot \be_{\bu}^i}^2
\leq \frac{2\alpha^2}{3(\frac{2\mu}{d} + \lambda)} \norm{e_p^i}^2
+ \frac{2\beta^2}{3(\frac{2\mu}{d} + \lambda)} \norm{e_T^i}^2.
\end{alignat}
\end{theorem}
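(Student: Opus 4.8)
The plan is to mirror the proof of Theorem~\ref{thm:wellposedness}, but now measuring the iterates of~\eqref{coupled_heatflow} against the \emph{unique} solution $(T^n,\br^n,p^n,\bw^n,\bu^n)$ of~\eqref{discretevar} provided by Theorem~\ref{thm:wellposedness}, and to absorb the extra terms created by the lagged displacement $\bu^{n,i-1}$ appearing in Step~1. First I would subtract~\eqref{discretevar} from~\eqref{coupled_heatflow} to get the error equations for $(e_T^i,\be_{\br}^i,e_p^i,\be_{\bw}^i,\be_{\bu}^i)$ of~\eqref{diffdef2}. These look structurally like the monolithic system~\eqref{emon1}--\eqref{emon5}, with one difference: the heat and flow error equations carry $\beta(\nabla\cdot\be_{\bu}^{i-1},S)$ and $\alpha(\nabla\cdot\be_{\bu}^{i-1},q)$ (the \emph{lagged} displacement error), whereas the mechanics error equation, $2\mu(\bep(\be_{\bu}^i),\bep(\bv))+\lambda(\nabla\cdot\be_{\bu}^i,\nabla\cdot\bv)=(\beta e_T^i+\alpha e_p^i,\nabla\cdot\bv)$, involves $\be_{\bu}^i$.

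Estimate~\eqref{est42} I would dispatch on its own, from the mechanics error equation alone: testing it with $\bv=\be_{\bu}^i$ and using the elementary bound $\norm{\nabla\cdot\bv}^2\le d\,\norm{\bep(\bv)}^2$ gives $(\tfrac{2\mu}{d}+\lambda)\norm{\nabla\cdot\be_{\bu}^i}\le\norm{\beta e_T^i+\alpha e_p^i}$, whence $\tfrac{\mu}{2}\norm{\bep(\be_{\bu}^i)}^2+\tfrac{\lambda}{4}\norm{\nabla\cdot\be_{\bu}^i}^2=\tfrac14(\beta e_T^i+\alpha e_p^i,\nabla\cdot\be_{\bu}^i)\le\tfrac1{4(\frac{2\mu}{d}+\lambda)}\norm{\beta e_T^i+\alpha e_p^i}^2$; combining with $\norm{\beta e_T^i+\alpha e_p^i}^2\le 2\beta^2\norm{e_T^i}^2+2\alpha^2\norm{e_p^i}^2\le\tfrac83(\beta^2\norm{e_T^i}^2+\alpha^2\norm{e_p^i}^2)$ yields~\eqref{est42}.

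For~\eqref{est4} I would test the error equations with $S=e_T^i$, $\by=\tau\be_{\br}^i$, $q=e_p^i$, $\bz=\tau\be_{\bw}^i$ and $\bv=\be_{\bu}^i$, and add them. The $-2b_0(e_T^i,e_p^i)$ coupling term and the two convective terms are handled exactly as in Theorem~\ref{thm:wellposedness}: using $\norm{\Mcal(\cdot)}_{(L^\infty(\Omega))^d}\le M$, the Lipschitz property of $\Mcal$ (which here gives $\norm{\Mcal(\br^{n,i})-\Mcal(\br^n)}\le\norm{\be_{\br}^i}$ and $\norm{\Mcal(\bw^{n,i-1})-\Mcal(\bw^n)}\le\norm{\be_{\bw}^{i-1}}$), Cauchy--Schwarz and Young with $\epsilon_1=c_fM$, $\epsilon_2=c_fMk_M/\theta_m$, together with the Thomas-type bound~\eqref{thomasest}, this produces the coefficient $\tfrac{\tau\theta_m}{4c_{\Omega,d}}-\tfrac{\tau c_f^2M^2}{2}(\tfrac{k_M}{\theta_m}+1)$ in front of $\norm{e_T^i}^2$ and the right-hand term $\tfrac\tau2\norm{\be_{\bw}^{i-1}}^2_{\bK^{-1}}$. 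The genuinely new step is the displacement coupling. The terms $\beta(\nabla\cdot\be_{\bu}^{i-1},e_T^i)+\alpha(\nabla\cdot\be_{\bu}^{i-1},e_p^i)=(\beta e_T^i+\alpha e_p^i,\nabla\cdot\be_{\bu}^{i-1})$ from the heat/flow equations combine with the contribution $-(\beta e_T^i+\alpha e_p^i,\nabla\cdot\be_{\bu}^i)$ coming from the mechanics equation into $-(\beta e_T^i+\alpha e_p^i,\nabla\cdot(\be_{\bu}^i-\be_{\bu}^{i-1}))$; re-using the mechanics error equation with test function $\be_{\bu}^i-\be_{\bu}^{i-1}$ and the binomial identity~\eqref{identities} then rewrites $2\mu\norm{\bep(\be_{\bu}^i)}^2+\lambda\norm{\nabla\cdot\be_{\bu}^i}^2$ plus this term exactly as $\mu\norm{\bep(\be_{\bu}^i)}^2+\tfrac\lambda2\norm{\nabla\cdot\be_{\bu}^i}^2+\mu\norm{\bep(\be_{\bu}^{i-1})}^2+\tfrac\lambda2\norm{\nabla\cdot\be_{\bu}^{i-1}}^2$ (all nonnegative, kept on the left) minus the increment $\mu\norm{\bep(\be_{\bu}^i-\be_{\bu}^{i-1})}^2+\tfrac\lambda2\norm{\nabla\cdot(\be_{\bu}^i-\be_{\bu}^{i-1})}^2$.

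To close, I would expand the stabilization contributions by the binomial identity, $L_T(e_T^i-e_T^{i-1},e_T^i)=\tfrac{L_T}{2}\norm{e_T^i}^2+\tfrac{L_T}{2}\norm{e_T^i-e_T^{i-1}}^2-\tfrac{L_T}{2}\norm{e_T^{i-1}}^2$ (and likewise for $p$), which leaves $\tfrac{L_T}{2}\norm{e_T^i-e_T^{i-1}}^2+\tfrac{L_p}{2}\norm{e_p^i-e_p^{i-1}}^2$ available on the left; the leftover increment is bounded by subtracting the mechanics error equations at levels $i$ and $i-1$ and testing with $\be_{\bu}^i-\be_{\bu}^{i-1}$, giving $\mu\norm{\bep(\be_{\bu}^i-\be_{\bu}^{i-1})}^2+\tfrac\lambda2\norm{\nabla\cdot(\be_{\bu}^i-\be_{\bu}^{i-1})}^2\le\tfrac1{2(\frac{2\mu}{d}+\lambda)}\norm{\beta(e_T^i-e_T^{i-1})+\alpha(e_p^i-e_p^{i-1})}^2$, whose right-hand side is split into $\norm{e_T^i-e_T^{i-1}}^2$ and $\norm{e_p^i-e_p^{i-1}}^2$ pieces and absorbed --- which is precisely what the hypotheses~\eqref{Lbound} allow. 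Dropping the (nonnegative, now superfluous) displacement norms $\norm{\bep(\be_{\bu}^i)}^2,\norm{\nabla\cdot\be_{\bu}^i}^2,\norm{\bep(\be_{\bu}^{i-1})}^2,\norm{\nabla\cdot\be_{\bu}^{i-1}}^2$ on the left and the absorbed increments, collecting the $\norm{e_T^i}^2,\norm{e_p^i}^2,\norm{\be_{\bw}^i}^2_{\bK^{-1}},\norm{\be_{\br}^i}^2_{\bth^{-1}}$ contributions, and imposing the stated smallness of $\tau$ so that the $\norm{e_T^i}^2$ coefficient stays positive, yields~\eqref{est4}; since the contraction factor is then $<1$ (as in Theorem~\ref{thm:wellposedness}) the iterates $(T^{n,i},\br^{n,i},p^{n,i},\bw^{n,i})$ converge to $(T^n,\br^n,p^n,\bw^n)$, and~\eqref{est42} forces $\be_{\bu}^i\to0$ as well, so the whole quintuple converges to the unique solution of~\eqref{discretevar}. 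The main obstacle I anticipate is exactly this last bookkeeping: deciding which telescoped displacement terms stay on the left and which move to the right, and splitting $\norm{\beta(e_T^i-e_T^{i-1})+\alpha(e_p^i-e_p^{i-1})}^2$ sharply enough (not with the crudest constants) so that the right-hand side collapses to exactly $\tfrac{L_T}{2}\norm{e_T^{i-1}}^2+\tfrac{L_p}{2}\norm{e_p^{i-1}}^2$ --- this is what pins down the sharp lower bounds~\eqref{Lbound} on $L_T$ and $L_p$.
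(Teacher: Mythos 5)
Your strategy is essentially the paper's: form the error equations of~\eqref{coupled_heatflow} against the discrete solution of~\eqref{discretevar}, test with $S=e_T^i$, $\by=\tau\be_{\br}^i$, $q=e_p^i$, $\bz=\tau\be_{\bw}^i$, handle the $b_0$-coupling and the two convective terms exactly as in Theorem~\ref{thm:wellposedness} (Cauchy--Schwarz, Young, the Lipschitz property of $\Mcal$, and the Thomas-lemma bound $\tfrac{\theta_m}{c_{\Omega,d}}\norm{e_T^i}^2\le\norm{\be_{\br}^i}^2_{\bth^{-1}}$), and absorb the displacement increment into the stabilization terms, which is what produces~\eqref{Lbound}. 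Your route to~\eqref{est42} via the elementary bound $\norm{\nabla\cdot\bv}^2\le d\norm{\bep(\bv)}^2$ is in fact more direct than the paper's, which interpolates $\norm{\nabla\cdot\bv}\le\xi\sqrt d\,\norm{\bep(\bv)}+(1-\xi)\norm{\nabla\cdot\bv}$ with $\xi=2\mu/(2\mu+d\lambda)$ following~\cite{both2017robust}; your constants there are consistent with~\eqref{est42}.

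The one point of divergence is the test function in the mechanics error equation. The paper takes $\bv=\be_{\bu}^{i-1}$, so the term $-(\beta e_T^i+\alpha e_p^i,\nabla\cdot\be_{\bu}^{i-1})$ cancels identically against the lagged coupling terms from the heat and flow equations, and the surviving cross term $2\mu(\bep(\be_{\bu}^{i}),\bep(\be_{\bu}^{i-1}))+\lambda(\nabla\cdot\be_{\bu}^{i},\nabla\cdot\be_{\bu}^{i-1})$ polarizes into $\tfrac{\mu}{2}\norm{\bep(\be_{\bu}^{i}+\be_{\bu}^{i-1})}^2-\tfrac{\mu}{2}\norm{\bep(\be_{\bu}^{i}-\be_{\bu}^{i-1})}^2$ (plus the analogous $\lambda$-terms), so that only $\tfrac{\mu}{2}\norm{\bep(\be_{\bu}^{i}-\be_{\bu}^{i-1})}^2+\tfrac{\lambda}{4}\norm{\nabla\cdot(\be_{\bu}^{i}-\be_{\bu}^{i-1})}^2$ must be absorbed; this is exactly what the budget $\tfrac{L_T}{2}\norm{e_T^i-e_T^{i-1}}^2+\tfrac{L_p}{2}\norm{e_p^i-e_p^{i-1}}^2$ covers under~\eqref{Lbound}. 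Your choice $\bv=\be_{\bu}^{i}$ plus a second use of the mechanics equation is algebraically equivalent, but the bookkeeping you describe --- discarding the four individual norms $\mu\norm{\bep(\be_{\bu}^{i})}^2$, $\mu\norm{\bep(\be_{\bu}^{i-1})}^2$, etc., and then absorbing the \emph{full} increment $\mu\norm{\bep(\be_{\bu}^{i}-\be_{\bu}^{i-1})}^2+\tfrac{\lambda}{2}\norm{\nabla\cdot(\be_{\bu}^{i}-\be_{\bu}^{i-1})}^2$ --- asks the stabilization terms to swallow twice as much, and even with the sharp splitting this forces $L_p\ge 2\alpha^2/(\tfrac{2\mu}{d}+\lambda)=\tfrac{6\alpha^2}{3(\frac{2\mu}{d}+\lambda)}$, i.e.\ $3/2$ times the stated bound. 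The repair is the parallelogram law, $\mu\norm{\bep(\be_{\bu}^{i})}^2+\mu\norm{\bep(\be_{\bu}^{i-1})}^2-\mu\norm{\bep(\be_{\bu}^{i}-\be_{\bu}^{i-1})}^2=\tfrac{\mu}{2}\norm{\bep(\be_{\bu}^{i}+\be_{\bu}^{i-1})}^2-\tfrac{\mu}{2}\norm{\bep(\be_{\bu}^{i}-\be_{\bu}^{i-1})}^2$: keeping the nonnegative sum-norm instead of the two separate norms halves the increment and puts you back on the paper's track, after which~\eqref{Lbound} suffices. So the obstacle you flag at the end is real, but it is one of regrouping rather than of method.
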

\begin{proof}
We start by taking the difference of equations \eqref{scheme1} -- \eqref{schemen} at iteration $i$ with the corresponding equations solved by $({T}^n, {\br}^n, {p}^n, {\bw}^n, {\bu}^n)$. This leads to the following set of difference equations
\bse
\begin{alignat}{2}
	&(c_0 + L_p)(e_p^{i}, q) - b_0(e_T^{i}, q) + \tau(\nabla \cdot \be_{\bw}^{i}, q) 
	+ \alpha(\nabla \cdot \be_{\bu}^{i-1}, q)= L_p(e_p^{i-1},q),
	&&\quad \forall q \in \Pcal_h,\label{itere1}\\
	&(\bK^{-1} \be_{\bw}^{i}, \bz) - (e_p^{i}, \nabla \cdot \bz) = 0,&&\quad \forall \bz \in \Wcal_h,\label{itere2}\\
	\nonumber &(a_0+L_T)(e_T^{i}, S) - b_0(e_p^i, S) + \tau(\nabla \cdot \be_{\br}^{i}, S)
	+ \tau c_f([\Mcal(\bw^{n,i-1}) - \Mcal({\bw}^n)] \cdot \bth^{-1} {\br}^n, S) \\
	&\quad+ \tau c_f(\Mcal(\bw^{n,i-1}) \cdot \bth^{-1} [\Mcal(\br^{n,i}) - \Mcal({\br}^{n})], S)   
	+ \beta(\nabla \cdot \be_{\bu}^{i-1},S) = L_T(e_T^{i-1}, S) ,
	&&\quad\forall S \in \Tcal_h, \label{itere3} \\
	&(\bth^{-1} \be_{\br}^{i}, \by) - (e_T^{i}, \nabla \cdot \by) = 0,&&\quad\forall \by \in \Rcal_h \label{itere4}\\
	&2\mu(\bep(\be_{\bu}^i), \bep(\bv)) + \lambda(\nabla \cdot \be_{\bu}^i, \nabla \cdot \bv) 
	- (\alpha e_p^i + \beta e_T^i, \nabla \cdot \bv) = 0,&&\quad \forall \bv \in \Ucal_h, \label{itere5}
\end{alignat}
\ese
where we used the following identity
\begin{align}
\nonumber&(\Mcal(\bw^{n,i-1}) \cdot \bth^{-1} \Mcal(\br^{n,i}), S) - (\Mcal({\bw}^{n}) \cdot \bth^{-1} \Mcal({\br}^{n}), S)\\
&\quad = 
([\Mcal(\bw^{n,i-1}) - \Mcal({\bw}^n)] \cdot \bth^{-1} \Mcal({\br}^n), S) 
+ (\Mcal(\bw^{n,i-1}) \cdot \bth^{-1} [\Mcal(\br^{n,i}) - \Mcal({\br}^n)], S).
\end{align}
The aim now is to show a contraction of successive error functions, thereby implying convergence of the sequences $(T^{n,i}, \br^{n,i}, p^{n,i}, \bw^{n,i}, \bu^{n,i})$ as $i \rightarrow \infty$ for $n \geq 1$, by the Banach Fixed Point Theorem~\cite{cheney2013analysis}. Taking as test functions $q = e_p^i, \bz = \tau \be_{\bw}^i, S = e_T^i, \by = \tau \be_{\br}^i$, and $\bv = \be_{\bu}^{i-1}$ in \eqref{itere1} -- \eqref{itere5}, respectively, and adding the resulting equations together, we obtain
\begin{align}
\nonumber&c_0 \norm{e_p^i}^2 + \tau \norm{\be_{\bw}^i}^2_{\bK^{-1}} 
+ a_0 \norm{e_T^i}^2 + \tau \norm{\be_{\br}^i}^2_{\bth^{-1}} \\
\nonumber &\quad+\frac{1}{2} L_p \left(\norm{e_p^i}^2 + \norm{e_p^i - e_p^{i-1}}^2 - \norm{e_p^{i-1}}^2 \right)
+ \frac{1}{2} L_T \left(\norm{e_T^i}^2 + \norm{e_T^i - e_T^{i-1}}^2 - \norm{e_T^{i-1}}^2 \right) \\
\nonumber &\qquad+2\mu \frac{1}{4} \norm{ \bep(\be_{\bu}^i + \be_{\bu}^{i-1})}^2 
+ \lambda \frac{1}{4} \norm{ \nabla \cdot (\be_{\bu}^i + \be_{\bu}^{i-1})}^2\\
\nonumber&\qquad \quad= 2b_0 (e_T^i, e_p^i) + 2\mu\frac{1}{4} \norm{\bep(\be_{\bu}^i - \be_{\bu}^{i-1})}^2 
+ \lambda \frac{1}{4}\norm{\nabla \cdot(\be_{\bu}^i - \be_{\bu}^{i-1})}^2 \\
&\qquad \qquad -\tau c_f([\Mcal(\bw^{n,i-1}) - \Mcal({\bw}^n)] \cdot \bth^{-1} \Mcal({\br}^n), e_T^i) 
- \tau c_f (\Mcal(\bw^{n,i-1}) \cdot \bth^{-1} [\Mcal(\br^{n,i}) - \Mcal({\br}^n)], e_T^i), \label{est1}
\end{align}
where we used the identities~\eqref{identities}. On the other hand, by taking the difference of eq. \eqref{itere5} at iteration $i$ and $i-1$, testing with $\be_{\bu}^{i} - \be_{\bu}^{i-1}$, and using the Cauchy-Schwarz inequality we get
\begin{align}
\nonumber &2\mu \norm{\bep(\be_{\bu}^i - \be_{\bu}^{i-1})}^2 + \lambda \norm{\nabla \cdot (\be_{\bu}^i - \be_{\bu}^{i-1})}^2 \\
&\quad=\alpha(e_p^i - e_p^{i-1}, \nabla \cdot (\be_{\bu}^i - \be_{\bu}^{i-1})) 
+ \beta(e_T^i - e_T^{i-1}, \nabla \cdot (\be_{\bu}^i - \be_{\bu}^{i-1}))\\
&\qquad \leq \left(\alpha \norm{e_p^i - e_p^{i-1}} + \beta \norm{e_T^i - e_T^{i-1}}\right) \norm{\nabla \cdot (\be_{\bu}^i - \be_{\bu}^{i-1})}.
\end{align}
Let now $\xi \in (0,1)$ and rewrite the above estimate as
\begin{align}
\nonumber &2\mu \norm{\bep(\be_{\bu}^i - \be_{\bu}^{i-1})}^2 + \lambda \norm{\nabla \cdot (\be_{\bu}^i - \be_{\bu}^{i-1})}^2 \\
&\quad \leq \left(\alpha \norm{e_p^i - e_p^{i-1}} + \beta \norm{e_T^i - e_T^{i-1}}\right) 
\left( \xi \sqrt{d} \norm{\bep(\be_{\bu}^i - \be_{\bu}^{i-1})} + (1-\xi) \norm{\nabla \cdot (\be_{\bu}^i - \be_{\bu}^{i-1})} \right).
\end{align}
We now follow \cite{both2017robust} and choose $\xi = \dfrac{2\mu}{2\mu + d\lambda}$, which together with the Young inequality yields
\begin{equation}\label{ubound}
\frac{\mu}{2} \norm{\bep(\be_{\bu}^i - \be_{\bu}^{i-1})}^2 + \frac{\lambda}{4} \norm{\nabla \cdot (\be_{\bu}^i - \be_{\bu}^{i-1})}^2
\leq \frac{2 \alpha^2}{3(\frac{2\mu}{d} + \lambda)} \norm{e_p^i - e_p^{i-1}}^2 
+ \frac{2\beta^2}{3(\frac{2\mu}{d} + \lambda)} \norm{e_T^i - e_T^{i-1}}^2.
\end{equation}
Combining this with eq. \eqref{est1} leads to
\begin{align}
\nonumber&(c_0 + \frac{L_p}{2}) \norm{e_p^i}^2 + \tau \norm{\be_{\bw}^i}^2_{\bK^{-1}} 
+ (a_0 + \frac{L_T}{2}) \norm{e_T^i}^2 + \tau \norm{\be_{\br}^i}^2_{\bth^{-1}} 
+\frac{\mu}{2} \norm{ \bep(\be_{\bu}^i + \be_{\bu}^{i-1})}^2 
+ \frac{\lambda}{4} \norm{ \nabla \cdot (\be_{\bu}^i + \be_{\bu}^{i-1})}^2 \\
\nonumber &\quad+ \left(\frac{L_p}{2} - \frac{2 \alpha^2}{3(\frac{2\mu}{d} + \lambda)} \right)\norm{e_p^i - e_p^{i-1}}^2
+ \left(\frac{L_T}{2} - \frac{2\beta^2}{3(\frac{2\mu}{d} + \lambda)} \right)\norm{e_T^i - e_T^{i-1}}^2\\
\nonumber&\qquad \leq 2b_0 (e_T^i, e_p^i) + \frac{L_p}{2}\norm{e_p^{i-1}}^2 +\frac{L_T}{2}\norm{e_T^{i-1}}^2
-\tau c_f([\Mcal(\bw^{n,i-1}) - \Mcal({\bw}^n)] \cdot \bth^{-1} \Mcal({\br}^n), e_T^i) \\
&\qquad \qquad- \tau c_f (\Mcal(\bw^{n,i-1}) \cdot \bth^{-1} [\Mcal(\br^{n,i}) - \Mcal({\br}^n)], e_T^i). \label{est2}
\end{align}
We thus need to impose some constraints on the stabilization parameters, i.e. $L_p \geq \dfrac{4\alpha^2}{3(\frac{2\mu}{d} + \lambda)}$ and $L_T \geq \dfrac{4\beta^2}{3(\frac{2\mu}{d} + \lambda)}$. With this, we can discard some positive terms on the left hand side of \eqref{est2}, and use the Cauchy-Schwarz and Young inequalities, together with the Lipschitz property of $\Mcal$ to obtain
\begin{align}
\nonumber&\left(c_0 - b_0 + \frac{L_p}{2} \right) \norm{e_p^i}^2 + \tau \norm{\be_{\bw}^i}^2_{\bK^{-1}} 
+ \left(a_0 - b_0 + \frac{L_T}{2} - \tau c_fM(\frac{\epsilon_1}{2} + \frac{\epsilon_2}{2})\right) \norm{e_T^i}^2 
+ \tau \norm{\be_{\br}^i}^2_{\bth^{-1}} \\
&\quad \leq \frac{L_p}{2}\norm{e_p^{i-1}}^2 +\frac{L_T}{2}\norm{e_T^{i-1}}^2
+ \tau c_f M\frac{k_M}{\theta_m}\frac{1}{2\epsilon_1}\norm{\be_{\bw}^{i-1}}^2_{\bK^{-1}}
+ \tau c_f M \frac{1}{2\epsilon_2} \norm{\be_{\br}^i}^2_{\bth^{-1}}, \label{est3}
\end{align}
for some $\epsilon_1, \epsilon_2 > 0$, and where $k_M$ and $\theta_m$ are given by \ref{Kdef} -- \ref{thdef}. From \eqref{itere4}, we obtain in the same way as in \eqref{thomasest}
\begin{equation}
\frac{\theta_m}{c_{\Omega,d}} \norm{e_T^i}^2 \leq \norm{\be_{\br}^i}^2_{\bth^{-1}}.\label{thomasest2}
\end{equation}
Replacing \eqref{thomasest2} in \eqref{est3} while choosing $\epsilon_1 = c_f M k_M/\theta_m$ and $\epsilon_2 = c_f M$, we get
\begin{align}
\nonumber&\left(a_0 - b_0 + \frac{L_T}{2} + \frac{\tau \theta_m}{4c_{\Omega,d}}- \frac{\tau c_f^2M^2}{2}\left(\frac{k_M}{\theta_m} + 1\right)\right) \norm{e_T^i}^2 + \frac{\tau}{4} \norm{\be_{\br}^i}^2_{\bth^{-1}} \\
\nonumber&\quad +\left(c_0 - b_0 + \frac{L_p}{2} \right) \norm{e_p^i}^2 + \tau \norm{\be_{\bw}^i}^2_{\bK^{-1}} \\
&\qquad \leq \frac{L_p}{2}\norm{e_p^{i-1}}^2 +\frac{L_T}{2}\norm{e_T^{i-1}}^2
+ \frac{\tau}{2} \norm{\be_{\bw}^{i-1}}^2_{\bK^{-1}}. \label{est5}
\end{align}
Thus, if the time step $\tau$ satisfies
\begin{equation}
0 < \tau < \frac{2(a_0 - b_0)}{c_f^2 M^2(\dfrac{k_M}{\theta_m} + 1) - \dfrac{\theta_m}{4c_{\Omega,d}}},
\end{equation}
we can write \eqref{est5} as
\begin{equation}
F^i \leq \frac{1}{1+\delta} F^{i-1}, 
\end{equation}
where
\begin{equation}
F^i := \frac{L_p}{2} \norm{e_p^i}^2 + \frac{L_T}{2} \norm{e_T^i} + \frac{\tau}{4} \norm{\be_{\bw}^i}^2_{\bK^{-1}},
\end{equation}
and
\begin{equation}
\delta := \min \bigg\{ \frac{2}{L_p}(c_0 - b_0), 
\frac{2}{L_T}\left(a_0 - b_0 + \frac{\tau \theta_m}{4c_{\Omega,d}} - \frac{\tau c_f^2M^2}{2}\left(\frac{k_M}{\theta_m} + 1\right)\right), \frac{1}{2} \bigg\} > 0.
\end{equation}
Going back to eq. \eqref{itere5}, we choose $\bv = \be_{\bu}^i$ as test function which leads to
\begin{align}
\nonumber 2\mu \norm{\bep(\be_{\bu}^i)}^2 + \lambda \norm{\nabla \cdot \be_{\bu}^i}^2
&= \alpha (e_p^i, \nabla \cdot \be_{\bu}^i) + \beta (e_T^i, \nabla \cdot \be_{\bu}^i) \\
&\leq (\alpha \norm{e_p^i} + \beta \norm{e_T^i})\norm{\nabla \cdot \be_{\bu}^i} \\
&\leq (\alpha \norm{e_p^i} + \beta \norm{e_T^i})
\left( \xi \sqrt{d} \norm{\bep(\be_{\bu}^i)} + (1-\xi)\norm{\nabla \cdot \be_{\bu}^i}\right),
\end{align}
for some $\xi \in (0,1)$. Following the same steps which led to \eqref{ubound}, and choosing as before $\xi = \dfrac{2\mu}{2\mu + d\lambda}$, we get by the Young inequality
\begin{equation}
\frac{\mu}{2}\norm{\bep(\be_{\bu}^i)}^2 + \frac{\lambda}{4} \norm{\nabla \cdot \be_{\bu}^i}^2
\leq \frac{2\alpha^2}{3(\frac{2\mu}{d} + \lambda)} \norm{e_p^i}^2
+ \frac{2\beta^2}{3(\frac{2\mu}{d} + \lambda)} \norm{e_T^i}^2.
\end{equation}
This shows a contraction of the residuals and therefore completes the proof.
\end{proof}
\begin{remark}[The other partially decoupled schemes]
 For the partially decoupled schemes~\textnormal{\textbf{HM-F}} 
 and~\textnormal{\textbf{FM-H}} (Algorithms~\ref{sect:algo3} and \ref{sect:algo4} respectively) the contractions can obtained similarly to 
 the scheme~\textnormal{\textbf{HF-M}} with minor changes in the coefficients.
\end{remark}
Before we state the last of our main results, we let the difference functions defined in~\eqref{diffdef2} now be the difference between the solutions at the iteration $i$ of problem~\eqref{decoupled2} and the solutions to~\eqref{discretevar}. The last of our main results then reads:
\begin{corollary}[Convergence of the fully decoupled algorithms]
Under the assumptions of Theorem~\ref{mainthm}, the fully decoupled $L$-scheme~\textnormal{\textbf{F-H-M}} (Algorithm~\ref{sect:algo6}) defines a contraction
\begin{align}
\nonumber&\left(a_0 - \frac{b_0}{2} + \frac{L_T}{2} + \frac{\tau\theta_m}{4c_{\Omega,d}} - \frac{\tau c_f^2M^2}{2} \left(\frac{k_M}{\theta_m} + 1\right)\right) \norm{e_T^i}^2 
 \\
\nonumber&\qquad\left(c_0 - b_0 + \frac{L_p}{2} \right) \norm{e_p^i}^2  
+ \frac{\tau}{2} \norm{\be_{\bw}^i}^2_{\bK^{-1}}+ \frac{\tau}{4} \norm{\be_{\br}^i}^2_{\bth^{-1}}\\
&\qquad \qquad \leq \left(\frac{L_T}{2} + \frac{b_0}{2}\right)\norm{e_T^{i-1}}^2\frac{L_p}{2}\norm{e_p^{i-1}}^2.\label{decoupled}
\end{align}
Furthermore, the estimate \eqref{est42} holds true.
\end{corollary}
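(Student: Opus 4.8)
The proof follows the template of Theorem~\ref{mainthm}, and the plan is to track the three ways in which the ordering flow $\to$ heat $\to$ mechanics of scheme~\textbf{F-H-M} changes the bookkeeping of that argument. First I would write the difference equations: subtract from~\eqref{decoupled2} (with $L_p(p^n,q)$ and $L_T(T^n,S)$ added to both sides of the flow and heat equations, respectively) the relations satisfied by the converged discrete solution $(T^n,\br^n,p^n,\bw^n,\bu^n)$ of~\eqref{discretevar}. The result has the same shape as~\eqref{itere1}--\eqref{itere5}, with two changes. In the flow error equation the temperature coupling appears as $-b_0(e_T^{i-1},q)$ (Step~1 uses $T^{n,i-1}$), whereas the heat error equation keeps $-b_0(e_p^i,S)$ (Step~2 uses the already updated $p^{n,i}$). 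In the heat error equation the convective error is split, as in the proof of Theorem~\ref{mainthm}, into $\tau c_f([\Mcal(\bw^{n,i}) - \Mcal(\bw^n)]\cdot\bth^{-1}\Mcal(\br^n),S) + \tau c_f(\Mcal(\bw^{n,i})\cdot\bth^{-1}[\Mcal(\br^{n,i}) - \Mcal(\br^n)],S)$, i.e.\ with the \emph{current} Darcy flux $\bw^{n,i}$ (produced in Step~1) rather than a lagged one; the mechanics error equation is exactly~\eqref{itere5} since Step~3 uses $(p^{n,i},T^{n,i})$.

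Next I would test the flow, Darcy, heat, heat-flux and mechanics error equations with $q = e_p^i$, $\bz = \tau\be_{\bw}^i$, $S = e_T^i$, $\by = \tau\be_{\br}^i$ and $\bv = \be_{\bu}^{i-1}$, respectively, and sum them. Exactly as in~\eqref{est1}, the $\alpha(\nabla\cdot\be_{\bu}^{i-1},e_p^i)$ and $\beta(\nabla\cdot\be_{\bu}^{i-1},e_T^i)$ contributions from the flow and heat equations cancel against the $\alpha,\beta$ terms of the mechanics equation tested with $\be_{\bu}^{i-1}$, and the identities~\eqref{identities} applied to the $L_p,L_T$ stabilization terms and to $2\mu(\bep(\be_{\bu}^i),\bep(\be_{\bu}^{i-1})) + \lambda(\nabla\cdot\be_{\bu}^i,\nabla\cdot\be_{\bu}^{i-1})$ produce the nonnegative ``sum'' terms $\tfrac{\mu}{2}\norm{\bep(\be_{\bu}^i+\be_{\bu}^{i-1})}^2 + \tfrac{\lambda}{4}\norm{\nabla\cdot(\be_{\bu}^i+\be_{\bu}^{i-1})}^2$ together with a ``difference'' remainder controlled by~\eqref{ubound}; under~\eqref{Lbound} the terms $(\tfrac{L_p}{2} - \tfrac{2\alpha^2}{3(\frac{2\mu}{d}+\lambda)})\norm{e_p^i - e_p^{i-1}}^2$ and $(\tfrac{L_T}{2} - \tfrac{2\beta^2}{3(\frac{2\mu}{d}+\lambda)})\norm{e_T^i - e_T^{i-1}}^2$ are nonnegative and, like the ``sum'' terms, are discarded from the left-hand side.

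The two genuinely new points are as follows. The $b_0$ cross terms now read $-b_0(e_p^i,e_T^i) - b_0(e_p^i,e_T^{i-1})$ instead of $-2b_0(e_T^i,e_p^i)$; a Young-inequality estimate of these produces $b_0\norm{e_p^i}^2 + \tfrac{b_0}{2}\norm{e_T^i}^2 + \tfrac{b_0}{2}\norm{e_T^{i-1}}^2$, which is precisely what yields the left-hand coefficient $a_0 - \tfrac{b_0}{2}$ on $\norm{e_T^i}^2$ and the extra $\tfrac{b_0}{2}\norm{e_T^{i-1}}^2$ on the right in~\eqref{decoupled}, the pressure coefficient $c_0 - b_0$ being unchanged. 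For the convective terms I would, as between~\eqref{est3} and~\eqref{est5}, use the Lipschitz property of $\Mcal$, the bounds $\norm{\Mcal(\bw^n)}_{(L^\infty(\Omega))^d}\le M$ and $\norm{\Mcal(\br^n)}_{(L^\infty(\Omega))^d}\le M$, the norm equivalences of~\ref{Kdef}--\ref{thdef}, Young's inequality with $\epsilon_1 = c_f M k_M/\theta_m$ and $\epsilon_2 = c_f M$, and finally the Thomas-lemma estimate~\eqref{thomasest2}; the only difference from Theorem~\ref{mainthm} is that the first convective term now carries $\be_{\bw}^i$ rather than $\be_{\bw}^{i-1}$, so the resulting $\tfrac{\tau}{2}\norm{\be_{\bw}^i}^2_{\bK^{-1}}$ is moved into the left-hand side term $\tau\norm{\be_{\bw}^i}^2_{\bK^{-1}}$, leaving $\tfrac{\tau}{2}\norm{\be_{\bw}^i}^2_{\bK^{-1}}$ there and no $\be_{\bw}^{i-1}$ term on the right; likewise $\tfrac{\tau}{2}\norm{\be_{\br}^i}^2_{\bth^{-1}}$ is absorbed and half of what remains is traded, via~\eqref{thomasest2}, for $\tfrac{\tau\theta_m}{4c_{\Omega,d}}\norm{e_T^i}^2$. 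Collecting terms gives~\eqref{decoupled}. The contraction then follows as in Theorem~\ref{mainthm}: with $F^i := \tfrac{L_p}{2}\norm{e_p^i}^2 + (\tfrac{L_T}{2} + \tfrac{b_0}{2})\norm{e_T^i}^2$, the right-hand side of~\eqref{decoupled} equals $F^{i-1}$ while the left-hand side dominates $(1+\delta)F^i$ for a $\delta > 0$ coming from $c_0 - b_0 > 0$ and $a_0 - b_0 > 0$ (Assumption~\ref{constraints}) and the stated time-step bound, which keeps the coefficient of $\norm{e_T^i}^2$ strictly above $\tfrac{L_T}{2} + \tfrac{b_0}{2}$; hence $F^i \to 0$, and then~\eqref{decoupled} forces $\be_{\bw}^i,\be_{\br}^i \to 0$, while~\eqref{est42} — obtained verbatim by testing the mechanics error equation with $\be_{\bu}^i$ and repeating the $\xi = \tfrac{2\mu}{2\mu + d\lambda}$ argument of Theorem~\ref{mainthm} — forces $\be_{\bu}^i \to 0$. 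I expect the only delicate point to be exactly this convective-term bookkeeping, namely making sure the $\be_{\bw}^i$ and $\be_{\br}^i$ contributions land on the left with coefficients small enough to be absorbed; the remainder is a transcription of the Theorem~\ref{mainthm} computation.
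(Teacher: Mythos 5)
Your proposal is correct and follows exactly the route the paper takes: form the difference equations for \textbf{F-H-M} against the converged solution of~\eqref{discretevar}, test as in Theorem~\ref{mainthm}, and track the two changes caused by the ordering (the lagged $-b_0(e_T^{i-1},q)$ coupling, which after Young's inequality produces the $a_0-\tfrac{b_0}{2}$ coefficient and the extra $\tfrac{b_0}{2}\norm{e_T^{i-1}}^2$ on the right, and the current flux $\bw^{n,i}$ in the convective term, whose contribution is absorbed into the left-hand $\tau\norm{\be_{\bw}^i}^2_{\bK^{-1}}$ instead of appearing on the right). The paper's own proof is only a two-sentence reference to Theorem~\ref{mainthm}; your bookkeeping supplies the omitted details and is consistent with the stated estimate~\eqref{decoupled}.
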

\begin{proof}
We follow  the same lines as in the proof of Theorem~\ref{mainthm}, and take the difference of equations \eqref{decoupled2_1} -- \eqref{decoupled2_5} with the same equations solved by $(T^n, \br^n, p^n, \bw^n, \bu^n)$, and obtain the difference equations for the fully decoupled scheme~\textnormal{\textbf{F-H-M}}. We then promptly obtain estimate \eqref{decoupled}, from which the contraction is inferred by choosing the stabilization parameters and the time step. That of the second estimate follows in exactly the same way.
\end{proof}
\begin{remark}[The fully decoupled scheme~\textnormal{\textbf{H-F-M}}]
 The contraction~\ref{decoupled} holds true for the fully decoupled scheme~\textnormal{\textbf{H-F-M}} 
(Algorithm~\ref{sect:algo5}) by exchanging  in there the coefficients in the right-hand side, i.e.,  $\dfrac{L_p}{2}$ 
becomes $\dfrac{L_p}{2}+\dfrac{b_0}{2}$ and $\dfrac{L_T}{2}+\dfrac{b_0}{2}$ becomes $\dfrac{L_p}{2}$.
\end{remark}

\section{Numerical experiments}\label{sec:experiments}
In the following we present three numerical test cases using the algorithms from Section~\ref{sec:algos}. The first is a constructed problem, posed on the unit square domain, with prescribed solutions for the temperature, pressure and displacements. Here, we consider five different parameter regimes, exhausting all possibilities of weak/strong coupling between the subproblems, and compare the number of iterations needed for convergence with decreasing mesh sizes. Since analytical solutions are available, we present also discretization errors. 

Next, we present two implementations of Mandel's problem, which is originally a benchmark problem in linear poroelasticity, extended here to nonlinear thermo-poroelasticity. For the original Mandel problem, analytical solutions for the pressure and displacement field are known. Due to the similarity of the thermo-poroelastic equations we consider with the linear Biot's equations, and due to the lack of benchmark problems for thermo-poroelasticity, we choose to use this problem as our second and third numerical test cases. Even though the analytical solutions are no longer valid when including temperature, we have sufficiently weak temperature effects in the first implementation of Mandel's problem that the computed pressure and displacement field matches the (isothermal) analytical solutions. The second implementation of Mandel's problem includes a heat source, which has a significant effect on both the pressure and displacement. 
Regarding the spatial discretization, we choose the following finite element spaces:
\bse
\begin{align}
\Rcal_h, \Wcal_h & := \{ \psi \in H(\divr; \Omega) : \forall K \in \Xcal_h, \ \psi |_{K}  \in \rtzero(K)\},\\
\Tcal_h, \Pcal_h & := \{ \phi \in L^2(\Omega) : \forall K \in \Xcal_h, \ \phi |_K  \in \pspace_0(K) \},\\
\Ucal_h & := \{ \eta \in (H^1(\Omega))^d : \forall K \in \Xcal_h, \ \eta |_K \in [\pspace_1(K)]^d\},
\end{align}
\ese
where $\rtzero(K)$ denotes the
lowest-order Raviart--Thomas finite-dimensional subspace associated with the element $K\in\Xcal_h$, and $\pspace_{l}(K)$ is the space of polynomials on $K\in\Xcal_h$ of total degree less than or equal to $l$. Thus, the spaces  $(\Tcal_h,\Rcal_h)$ and $(\Pcal_h,\Wcal_h)$ 
are the  lowest order Raviart-Thomas  mixed finite element spaces for the mixed  flow and heat flow subproblems, respectively. Note that both spaces satisfy the condition \eqref{divcondition}, see \textit{e.g.}~\cite{gatica2014simple} for more details on (mixed) finite elements. The vector valued space $\Ucal_h$ is the  first order Lagrange finite element space for the mechanics problem.   We employ the following stopping criterion for the iterative algorithms, given in terms of the relative and absolute tolerances, $\textnormal{aTOL}$ and $\textnormal{rTOL}$, i.e. 
\begin{equation}\label{stop_crit}
\norm{(T^i, \br^i, p^i, \bw^i, \bu^i) - (T^{i-1}, \br^{i-1}, p^{i-1}, \bw^{i-1}, \bu^{i-1})} \leq \textnormal{aTOL} + \textnormal{rTOL}\norm{(T^i, \br^i, p^i, \bw^i, \bu^i)},
\end{equation} 
where we set $\textnormal{aTOL} = \textnormal{rTOL} =$ 1e-6 for all the computations. 
For the solution of the linear subproblems, we make use of  a direct sparse linear solver from the Python library SciPy, i.e., \texttt{scipy.sparse.linalg.spsolve}. The present  approaches can also be combined with  iterative solvers adapted to  the various subproblems. All numerical tests are implemented in a finite element code written in Python, the complete source code is accessible at \url{https://github.com/matkbrun/FEM}.  

\subsection{Test case~1: an academic example
with a manufactured solution}
As a first test case, we let the domain be a regular triangularization of the unit square, i.e., $\Omega = [0,1] \times [0,1] \subset \real^2$, and prescribe the following smooth solutions for the temperature, pressure and displacement
\bse
\begin{align}
T(x,t) &= t x_1(1-x_1)x_2(1-x_2), \\
p(x,t) &= t x_1(1-x_1)x_2(1-x_2), \\
\bu(x,t) &= t x_1(1-x_1)x_2(1-x_2) [1, 1]^\top,
\end{align}
where $x := (x_1, x_2) \in \real^2$, $t\geq 0$. The flux fields are then computed by
\begin{equation}
\br = - \bth \nabla T, \quad \text{ and } \quad \bw = - \bK \nabla p,
\end{equation}
\ese
while right hand sides, i.e., $z, g$ and $\bff$, can be calculated explicitly using equations \eqref{thporo:primalheat}--\eqref{thporo:primalmech}. We prescribe homogenous initial conditions and homogenous Dirichlet boundary conditions, for the temperature, pressure and displacement. All computations are done on a fixed time step, i.e., $\tau = 1.0$, and continued until criterion~\eqref{stop_crit} is satisfied.

For the analysis and comparison of our algorithms, we  consider  dimensionless equations, i.e. all parameters are set to $1.0e-1$, except for the three coupling coefficients $\{\alpha, \beta, b_0\}$, which we vary in order to \textit{weaken/strengthen} the coupling between the three subproblems. In particular, we consider five different parameter regimes, \textbf{PR1 -- PR5}, specified in Table~\ref{testcases}:

\begin{table}[h!]
\begin{center}
\begin{tabular}{| c || c | c | c | c | c | c |} 
\hline
		&\textbf{PR1}	&\textbf{PR2}	&\textbf{PR3}	&\textbf{PR4}	&\textbf{PR5}	\\ \hline \hline
$\alpha$	&1.0		&0.1  	&0.1  	&1.0		&0.1  	\\ \hline
$\beta$	&1.0		&0.1  	&1.0		&0.1  	&0.1  	\\ \hline
$b_0$	&1.0		&1.0		&0.1  	&0.1  	&0.1  	\\ \hline
\end{tabular}
\caption{Smooth solution: parameter regimes for varying strong/weak coupling between subproblems.}\label{testcases}
\end{center}
\end{table}
We also set $a_0 = c_0 = 2b_0$, thus satisfying~\ref{constraints}. Table~\ref{iterationscounts_theory} shows number of iterations needed for convergence using the six algorithms from Subsections~\ref{sect:algo1}, \ref{subsec:algo_two_levels} and~\ref{subsec:algo_three_levels}, for a single time step with decreasing mesh sizes, and stabilization according to equality in~\eqref{Lbound}. 
\begin{table}[h!]
\begin{center}
\begin{tabular}{| c || c | c | c | c | c || c | c | c | c | c |}
\hline
	&\textbf{PR1}	&\textbf{PR2}	&\textbf{PR3}	&\textbf{PR4}	&\textbf{PR5}	&\textbf{PR1}	&\textbf{PR2}	&\textbf{PR3} 	&\textbf{PR4} &\textbf{PR5} \\ \hline \hline
$h$ & \multicolumn{5}{c|| }{\textbf{HFM}} & \multicolumn{5}{c|}{\textbf{HF-M}} \\
\hline \hline
1/4 	&7	&3	&8	&8	&3	&31	&4	&11	&11	&4	\\
1/8 	&7	&3	&7	&7	&3	&35	&4	&13	&13	&4	\\
1/16	&6	&3	&7	&7	&3	&40	&4	&13	&13	&4	\\
1/32	&6	&3	&7	&7	&3	&41	&4	&13	&13	&4	\\
1/64	&6	&3	&7	&7	&3	&41	&4	&13	&13	&4	\\
\hline \hline
$h$ & \multicolumn{5}{c|| }{\textbf{HM-F}} & \multicolumn{5}{c|}{\textbf{FM-H}} \\
\hline \hline
1/4 	&9	&6	&8	&11	&4	&9	&6	&11	&8	&4	\\
1/8 	&9	&6	&7	&11	&4	&9	&6	&11	&7	&4	\\
1/16	&9	&6	&7	&11	&4	&9	&6	&11	&7	&4	\\
1/32	&9	&6	&7	&11	&4	&9	&6	&11	&7	&4	\\
1/64	&9	&6	&7	&11	&4	&9	&6	&11	&7	&4	\\
\hline \hline
$h$ & \multicolumn{5}{c|| }{\textbf{H-F-M}} & \multicolumn{5}{c|}{\textbf{F-H-M}} \\
\hline \hline
1/4 	&20	&6	&11	&11	&4	&20	&6	&11	&11	&4	\\
1/8 	&22	&6	&12	&12	&4	&22	&6	&12	&12	&4	\\
1/16	&24	&6	&13	&13	&4	&24	&6	&13	&13	&4	\\
1/32	&24	&6	&13	&13	&4	&24	&6	&13	&13	&4	\\
1/64	&24	&6	&13	&13	&4	&24	&6	&13	&13	&4	\\
\hline 
\end{tabular}
\caption{Smooth solution: number of iteration  with decreasing mesh sizes for parameter regimes \textbf{PR1 -- PR5}. Stabilization from theory.} \label{iterationscounts_theory}
\end{center}
\end{table}

We see that for parameter regimes $1$, $3$ and $4$ we have higher iterations numbers than for parameter regimes 2 and 5, for all six algorithms. This is because $L_T \sim \beta^2$ and $L_p \sim \alpha^2$, and larger stabilization results in higher iteration numbers. Furthermore, as expected, the strongly coupled parameter regime (\textbf{PR1}) yields the highest iteration numbers, in particular for the algorithms \textbf{HF-M}, \textbf{H-F-M} and \textbf{F-H-M}. Apart from this, the algorithms are performing robustly both with respect to different coupling regimes and decreasing mesh sizes. For comparison we also provide in Table~\ref{iterationscounts_nostab}, the results without  stabilization, i.e., $L_T = L_p = 0$.
\begin{table}[h!]
\begin{center}
\begin{tabular}{| c || c | c | c | c | c || c | c | c | c | c |}
\hline
	&\textbf{PR1}	&\textbf{PR2}	&\textbf{PR3}	&\textbf{PR4}	&\textbf{PR5}	&\textbf{PR1}	&\textbf{PR2}	&\textbf{PR3} 	&\textbf{PR4} &\textbf{PR5} \\ \hline \hline
$h$ & \multicolumn{5}{c|| }{\textbf{HFM}} & \multicolumn{5}{c|}{\textbf{HF-M}} \\
\hline \hline
1/4 	&3	&3	&3	&3	&3	&-	&4	&16	&16	&4	\\
1/8 	&3	&3	&3	&3	&3	&-	&4	&19	&19	&4	\\
1/16	&3	&3	&3	&3	&3	&-	&4	&20	&20	&4	\\
1/32	&3	&3	&3	&3	&3	&-	&4	&20	&20	&4	\\
1/64	&3	&3	&3	&3	&3	&-	&4	&20	&21	&4	\\
\hline \hline
$h$ & \multicolumn{5}{c|| }{\textbf{HM-F}} & \multicolumn{5}{c|}{\textbf{FM-H}} \\
\hline \hline
1/4 	&11	&6	&4	&22	&4	&11	&6	&21	&4	&4	\\
1/8 	&11	&6	&4	&23	&4	&11	&6	&23	&4	&4	\\
1/16	&12	&6	&4	&24	&4	&11	&6	&24	&4	&4	\\
1/32	&12	&6	&4	&24	&4	&12	&6	&24	&4	&4	\\
1/64	&12	&6	&4	&25	&4	&12	&6	&24	&4	&4	\\
\hline \hline
$h$ & \multicolumn{5}{c|| }{\textbf{H-F-M}} & \multicolumn{5}{c|}{\textbf{F-H-M}} \\
\hline \hline
1/4 	&34	&6	&17	&16	&4	&34	&6	&16	&17	&4	\\
1/8 	&38	&5	&19	&19	&4	&38	&5	&19	&19	&4	\\
1/16	&44	&5	&20	&20	&4	&44	&5	&20	&20	&4	\\
1/32	&46	&5	&20	&20	&4	&46	&5	&20	&21	&4	\\
1/64	&46	&5	&21	&20	&4	&46	&5	&20	&21	&4	\\
\hline 
\end{tabular}
\caption{Smooth solution: number of iterations  with decreasing mesh sizes for parameter regimes \textbf{PR1 -- PR5}. $L_T = L_p = 0$.} \label{iterationscounts_nostab}
\end{center}
\end{table}

We see here that the fully monolithic algorithm (\textbf{HFM}) has low iteration counts for all parameter regimes since this is only a linearization scheme, and does not require stabilization (cf. Theorem~\ref{thm:wellposedness}). For the two-level (Section~\ref{subsec:algo_two_levels}) and three-level (Section~\ref{subsec:algo_three_levels}) algorithms, which involves some splitting as well as linearization, we see that iteration counts for different parameter regimes corresponds to the various coupling/decoupling of the subproblems present in the algorithms (splitting of subproblems which are strongly coupled yields high iteration numbers, compared to solving the strongly coupled subproblems together). This is in contrast to employing stabilization, which greatly improves the robustness of the algorithms with respect to variations in parameters. For the strongly coupled parameter regime (\textbf{PR1}), we even have no convergence for algorithm \textbf{HF-M}, when no stabilization is applied. 

Furthermore, in order to check the robustness of the proposed schemes with respect to the nonlinearity, we adjust the coefficient of the nonlinear term, $c_f$, in order to make this term dominate. Table~\ref{iterationscounts_linearization} shows number of iterations needed for convergence when $c_f = 10$, for both the strongly coupled parameter regime (\textbf{PR1}) and the weakly coupled parameter regime (\textbf{PR5}). We also compare the results when no stabilization is applied. Note that we here only use a single mesh with $h = 1/16$.
\begin{table}[h!]
\begin{center}
\begin{tabular}{| c || c | c || c | c |}
\hline
Parameters	&\textbf{PR1}	&\textbf{PR5}	&\textbf{PR1}	&\textbf{PR5}	 \\ \hline \hline
\#	& \multicolumn{2}{c|| }{\textbf{HFM}} & \multicolumn{2}{c|}{\textbf{HF-M}} \\
\hline \hline
Non-stabilized 	&4	&4	&-	&5	\\ \hline
Stabilized 		&7	&4	&41	&5	\\
\hline \hline
\#	& \multicolumn{2}{c|| }{\textbf{HM-F}} & \multicolumn{2}{c|}{\textbf{FM-H}} \\
\hline \hline
Non-stabilized 	&11	&4	&10	&4	\\ \hline
Stabilized 		&9	&4	&8	&4	\\
\hline \hline
\#	& \multicolumn{2}{c|| }{\textbf{H-F-M}} & \multicolumn{2}{c|}{\textbf{F-H-M}} \\
\hline \hline
Non-stabilized 	&48	&5	&36	&4	\\ \hline
Stabilized 		&25	&5	&22	&4	\\
\hline 
\end{tabular}
\caption{Smooth solution: number of iterations with strong nonlinear effects, i.e. $c_f = 10$, and mesh size $h = 1/16$.} \label{iterationscounts_linearization}
\end{center}
\end{table}

For the weakly coupled parameter regime (\textnormal{\textbf{PR5}}), there is no difference in iteration numbers between the stabilized and non-stabilized algorithms, even with a dominating nonlinearity. For the strongly coupled parameter regime (\textnormal{\textbf{PR1}}), however, the stabilized algorithms has a significantly lower iteration count. This might be due to the fact that the nonlinearity appears as a coupling term.

Since analytical solutions are available for this problem, we provide also the discretization errors, denoted by $(e_{h,T}, e_{h,\br}, e_{h,p}, e_{h,\bw}, e_{h,\bu})$, measured in the $L^2$-norm. Due to almost no variation in discretization errors between the six algorithms and between the different parameter regimes (less than 5\%), we provide in Table~\ref{discretization_errors} the discretization errors using algorithm~\textbf{F-HM} applied on the weakly coupled parameter regime (\textbf{PR5}).
\begin{table}[h!]
\begin{center}
\begin{tabular}{| c || c | c | c | c | c |}
\hline
$h$	&$e_{h,T}$	&$e_{h,\br}$	&$e_{h,p}$	&$e_{h,\bw}$	&$e_{h,\bu}$	\\ \hline \hline
1/4	&8.5e-3		&3.5e-3		&8.5e-3		&3.5e-3		&5.6e-3		\\ 
1/8	&4.4e-3		&1.8e-3		&4.4e-3		&1.8e-3		&1.4e-3		\\ 
1/16	&2.2e-3		&9.3e-4		&2.2e-3		&9.3e-4		&3.6e-4		\\ 
1/32	&1.1e-3		&4.7e-4		&1.1e-3		&4.7e-4		&9.1e-5		\\ 
1/64	&5.5e-4		&2.3e-4		&5.5e-4		&2.3e-4		&2.3e-5		\\ 
\hline
\end{tabular}
\caption{Smooth solution: discretization errors  using algorithm~\textbf{F-HM} applied on the weakly coupled parameter regime (\textbf{PR5}), stabilization from theory.}\label{discretization_errors}
\end{center}
\end{table}
\subsection{Test case~2: Mandel's problem}
See~\cite{coussy2004poromechanics} for a detailed description of Mandel's problem. Formulas for the analytical pressure and displacements can be found in~\cite{phillips2008coupling}. We provide here only a brief description; Mandel's problem is posed on a rectangular domain representing a poroelastic slab of extent $2a$ in the horizontal direction, $2b$ in the vertical direction, and infinitely long in the third direction. The poroelastic slab in contained between two rigid plates, where at the initial time a downward force of magnitude $2F$ is applied to the top plate, with an equal but opposite force applied to the bottom plate. The top, left and bottom boundary is treated as impermeable, while zero pressure (and temperature) is prescribed at the right boundary. Due to the nature of Mandel's problem, the pressure, temperature and horizontal component of the displacement varies only in the horizontal direction, while the vertical component of the displacement varies only in the vertical direction. From symmetry considerations, it suffices to consider only the top right quarter rectangle, i.e. the computational domain is $[0,a] \times [0,b]$ (see Figure~\ref{fig:mandeldomain}).

\begin{figure}[h!]
  \centering
    \includegraphics[width=0.4\textwidth]{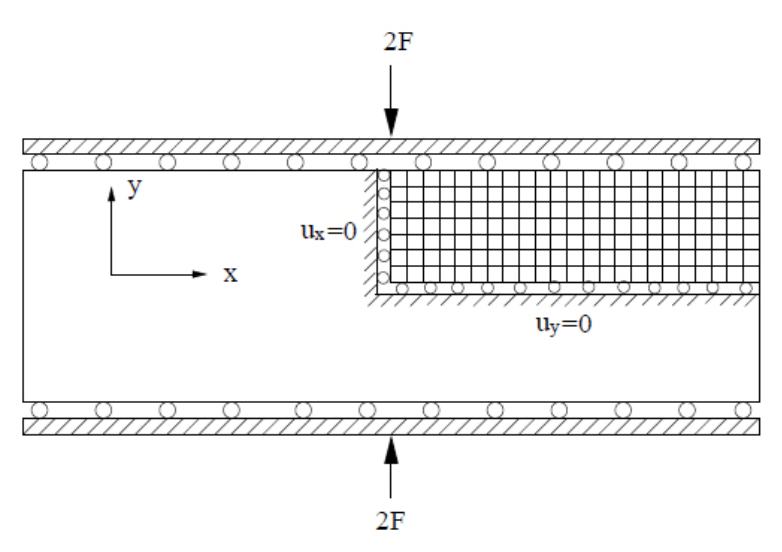}
  \caption{Setting of Mandel's problem quarter domain (figure from~\cite{mikelic2014numerical}).}\label{fig:mandeldomain}
\end{figure}

We perform now all computations with realistic choices of physical parameters. In particular, we take mechanics and flow parameters identical to~\cite{mikelic2014numerical}, and heat parameters identical to~\cite{kim2015unconditionally}. However, in~\cite{kim2015unconditionally} the flow-heat coupling coefficient $b_0$ is taken to be identically zero, so in order to preserve this coupling we instead choose a suitably small number (i.e. one that satisfies~\ref{constraints}). All parameters are listed in Table~\ref{mandel_physics}.

\begin{table}[h!]
\begin{center}
  \begin{tabular}{| l | l | l | l |}
    \hline
    Symbol & Quantity & Value & Unit \\ \hline \hline
    $E$ 	& Bulk modulus 	& 5.94e9 	& \SI{}{Pa} \\ 
    $\nu$ 	& Poisson's ratio 	& 0.2 	& - \\ 
    $c_0$  & Storage coefficient	& 6.06e-11 	& \SI{}{Pa^{-1}} \\
    $\alpha$  & Biot's coefficient 	& 1.0 	& -  \\
    $\mu_f$ & Fluid viscosity 	& 1.0e-3 	& \SI{}{Pa.s} \\
    $\hat{\bK}$ & Permeability & 9.87e-14 $\bid$ & \SI{}{m^2} \\
    $\bth$ & Effective thermal conductivity & 1.7 $\bid$ & \SI{}{W.m^{-1}.K^{-1}} \\
    $b_0$ & Thermal dilation coefficient & 3.03e-11 & \SI{}{K^{-1}} \\
    $\beta$ & Thermal stress coefficient & 9.9e6 & \SI{}{Pa.K^{-1}} \\
    $a_0$ & Effective heat capacity & 0.92e3 & \SI{}{J.kg^{-1}.K^{-1}} \\
    $T_{\textnormal{ref}}$ & Reference temperature & 298.15 & \SI{}{K} \\
    $c_f$ & Volumetric heat capacity fluid & 4.18e6 & \SI{}{J.m^{-3}.K^{-1}} \\
    $\tau$ & Time step & 10 & \SI{}{s} \\ 
    \hline
  \end{tabular} 
\end{center}
\caption{Mandel's problem: physical parameters, taken from~\cite{mikelic2014numerical,kim2015unconditionally}.}\label{mandel_physics}
\end{table}

In terms of our previous notation, we now have $\bK = \mu_f^{-1} \hat{\bK}$, and $\mu = \dfrac{E}{2(1+\nu)}$ and $\lambda = \dfrac{E\nu}{(1+\nu)(1+2\nu)}$. Note also that we will now employ the dimensional version of the heat equation~\eqref{thporo:primalheat}, which reads (in primal form)
\begin{equation}
\partial_{t}\left(a_0\frac{T}{T_{\textnormal{ref}}} - b_0 p + \beta \nabla \cdot \bu\right) + c_f(\bK \nabla p) \cdot \nabla \frac{T}{T_{\textnormal{ref}}} - \nabla \cdot \left(\bth \nabla \frac{T}{T_{\textnormal{ref}}} \right) =z.
\end{equation}
The magnitude of the compressive force is $F = \SI{2e8}{Pa.m}$, and the physical dimensions of the quarter rectangle is given by $a = \SI{100}{m}$ and $b = \SI{10}{m}$, of which we make a regular triangularization. We impose the compressive force as a Dirichlet boundary condition on the top boundary ($x_2 = b$) for the vertical component of the displacement. We denote by $n_{1}$ and $n_{2}$ the number of subdivisions of the domain in the $x_1$ and $x_2$ directions, respectively. For the first implementation of Mandel's problem we prescribe homogenous boundary conditions and source term for the heat problem. Figure~\ref{fig:plots_noheat} shows the solution profiles for the pressure, temperature and displacements for selected time steps, with the analytical (isothermal) solutions for the pressure and displacement included for comparison. 

\begin{figure}[h!]
    \centering
    \begin{subfigure}[b]{0.4\textwidth}
        \includegraphics[width=\textwidth]{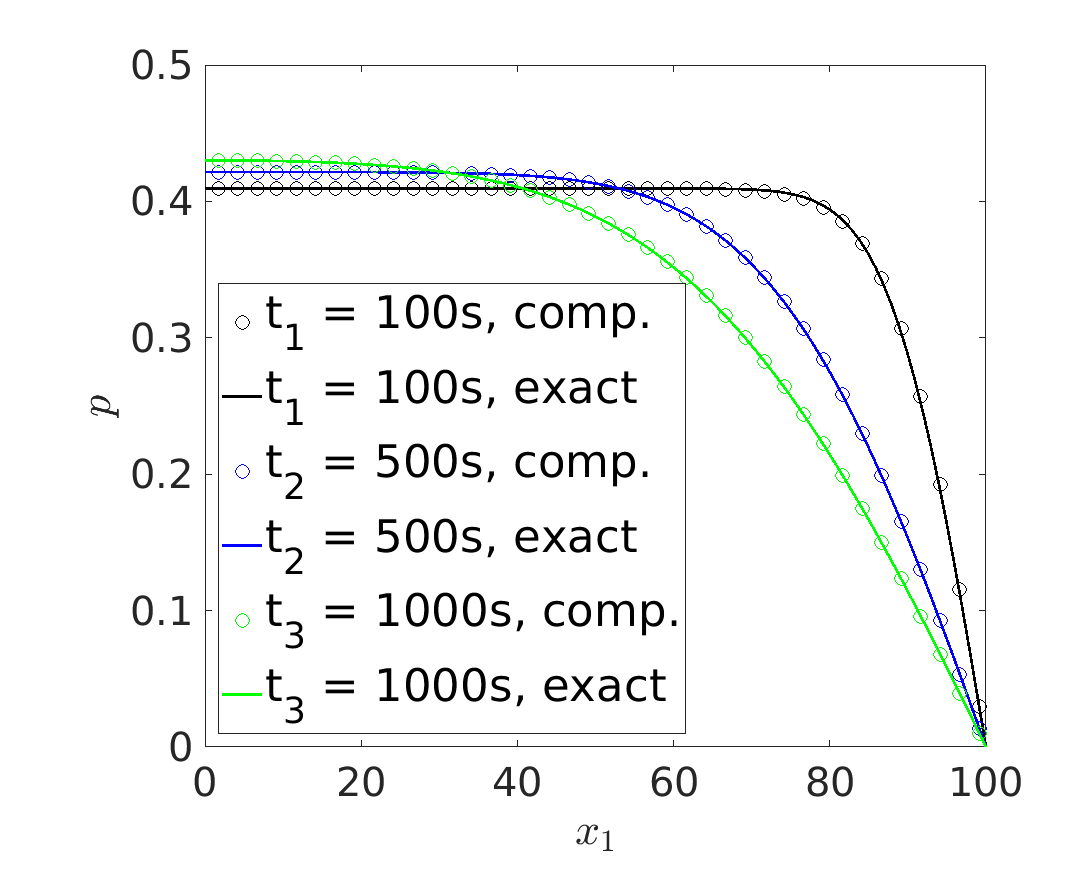}
        \caption{Pressure profile.}
        \label{fig:pressure_noheat}
    \end{subfigure}
    ~ \qquad
    \begin{subfigure}[b]{0.4\textwidth}
        \includegraphics[width=\textwidth]{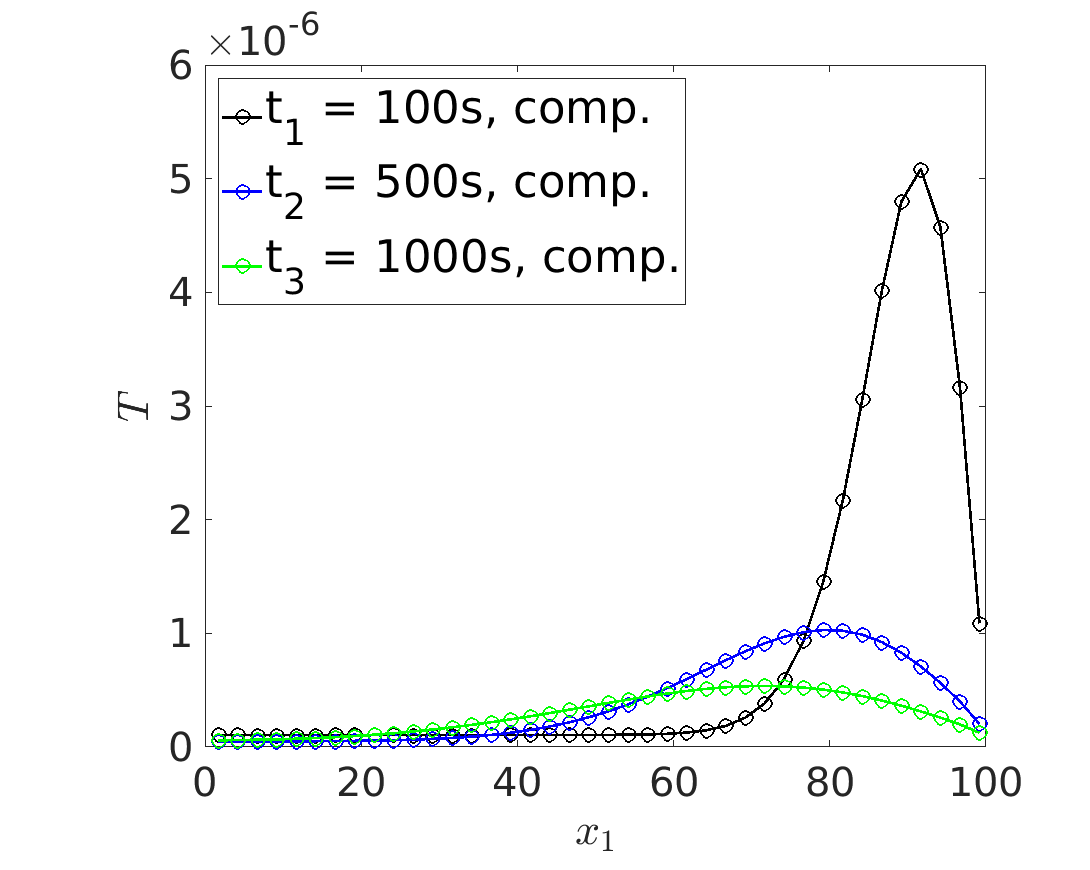}
        \caption{Temperature profile}
        \label{fig:temperature_noheat}
    \end{subfigure}
    
     \begin{subfigure}[b]{0.4\textwidth}
        \includegraphics[width=\textwidth]{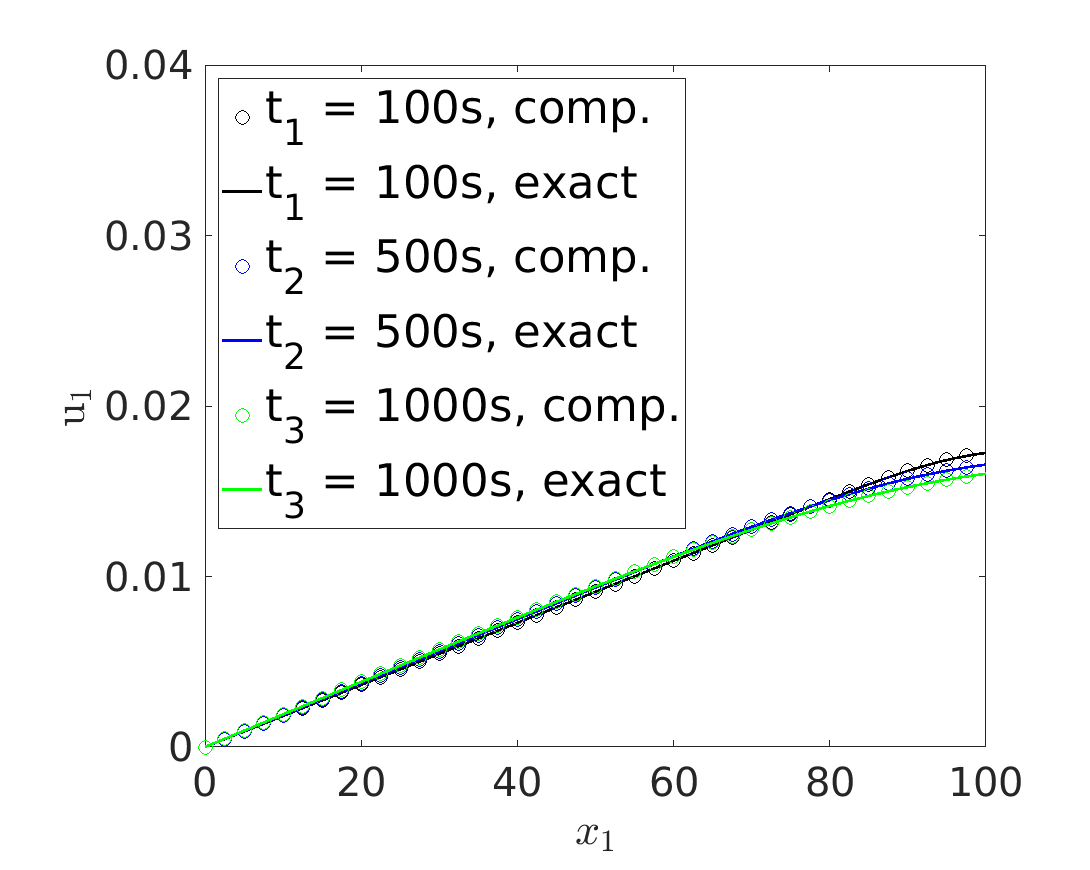}
        \caption{Displacement profile, 1st component}
        \label{fig:disp1_noheat}
    \end{subfigure}
    ~ \qquad
    \begin{subfigure}[b]{0.4\textwidth}
        \includegraphics[width=\textwidth]{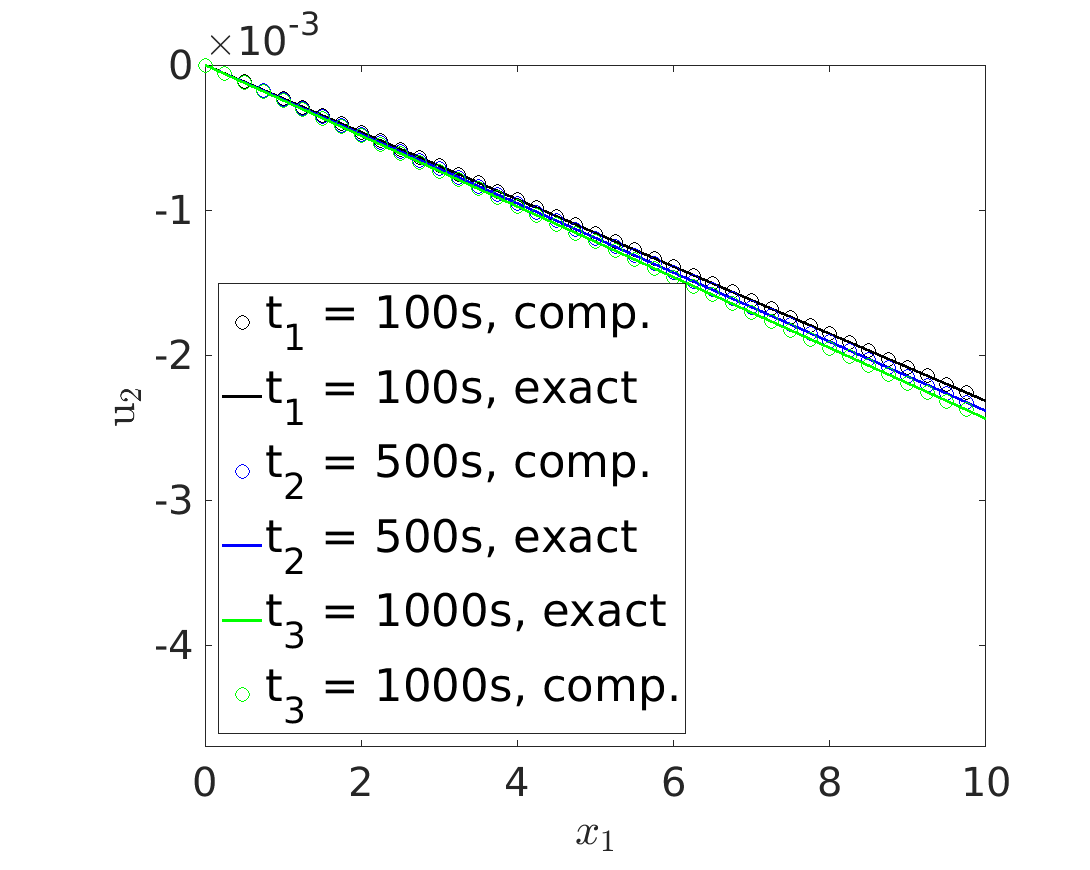}
        \caption{Displacement profile, 2nd component}
        \label{fig:disp2_noheat}
    \end{subfigure}
    \caption{Mandel's problem:  solution profiles for Mandel's problem at $t \in \{\SI{100}{s}, \SI{500}{s}, \SI{1000}{s}\}$, with $z = \SI{0}{W.m^{-3}.K^{-1}}$, and $n_1 = n_2 = 40$.}\label{fig:plots_noheat}
\end{figure}

The computed solutions for pressure and displacement matches the analytical ones, even though the analytical solutions are only valid for the linear isothermal problem. This is because the induced temperature effect in the system is small enough that the heat decouples from the flow and mechanics.
For the second implementation of Mandel's problem we prescribe a constant source term for the heat problem, i.e., $z = \SI{2e-4}{W.m^{-3}.K^{-1}}$. Figure~\ref{fig:plots} shows the solution profiles for the pressure, temperature and displacements at selected time steps. 

\begin{figure}[h!]
    \centering
    \begin{subfigure}[b]{0.4\textwidth}
        \includegraphics[width=\textwidth]{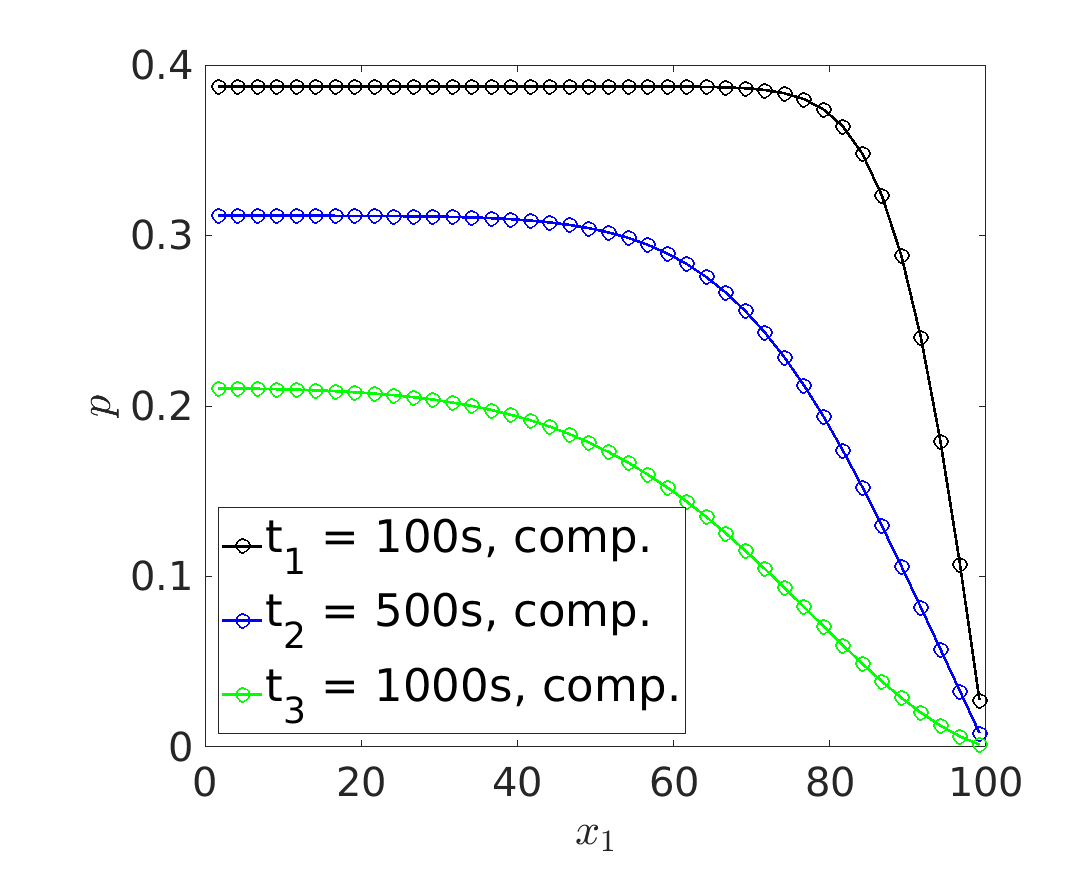}
        \caption{Pressure profile.}
        \label{fig:pressure}
    \end{subfigure}
    ~ \qquad
    \begin{subfigure}[b]{0.4\textwidth}
        \includegraphics[width=\textwidth]{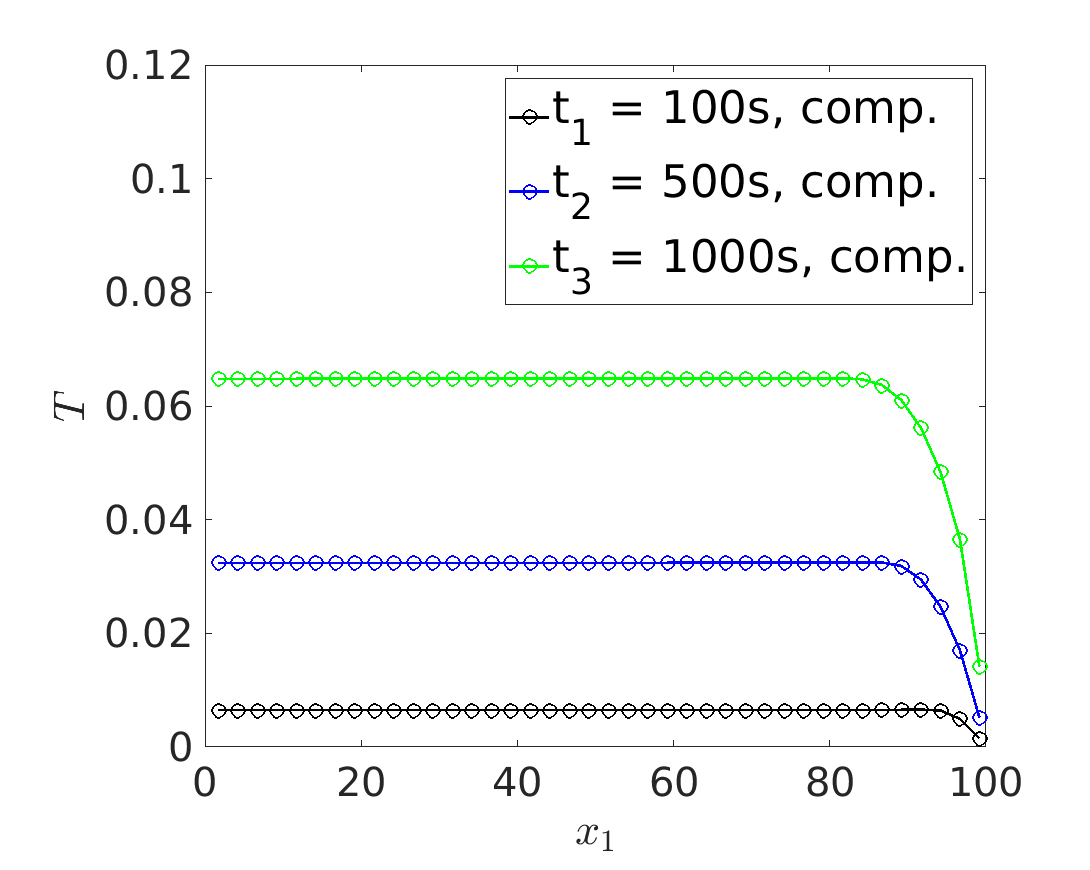}
        \caption{Temperature profile}
        \label{fig:temperature}
    \end{subfigure}
    
     \begin{subfigure}[b]{0.4\textwidth}
        \includegraphics[width=\textwidth]{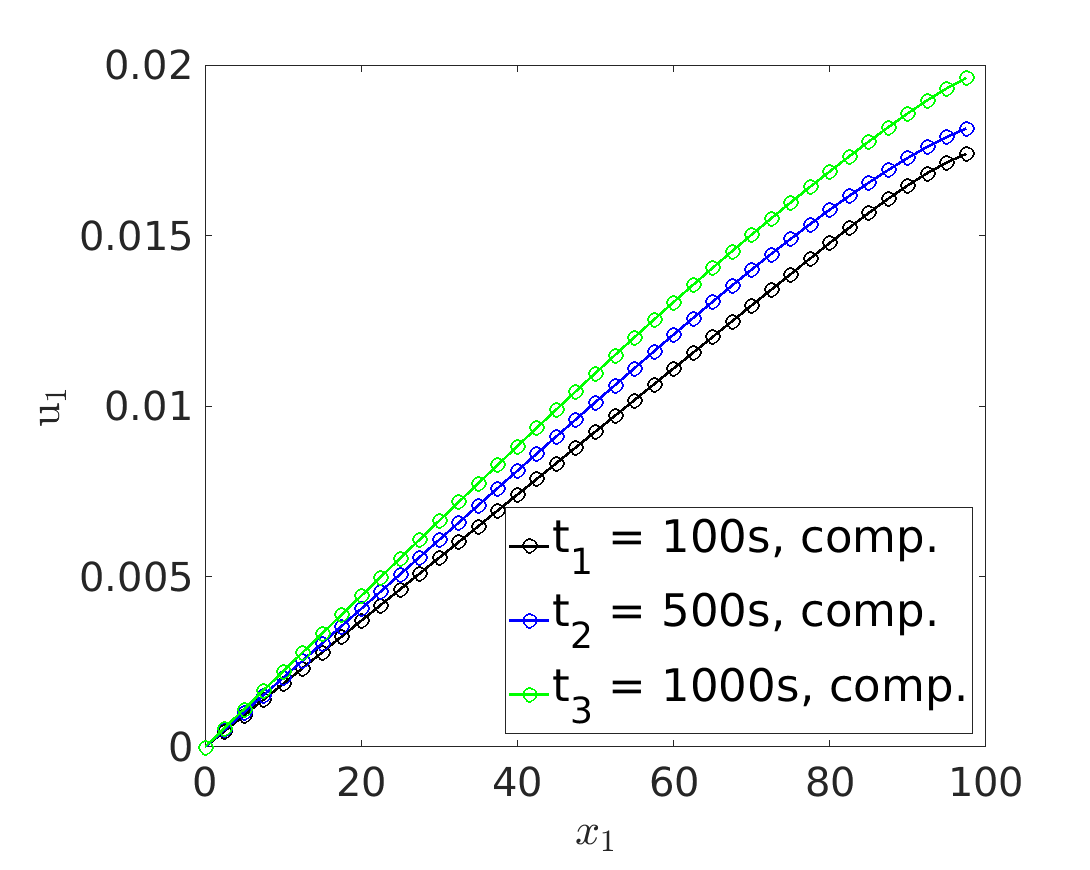}
        \caption{Displacement profile, 1st component}
        \label{fig:disp1}
    \end{subfigure}
    ~ \qquad
    \begin{subfigure}[b]{0.4\textwidth}
        \includegraphics[width=\textwidth]{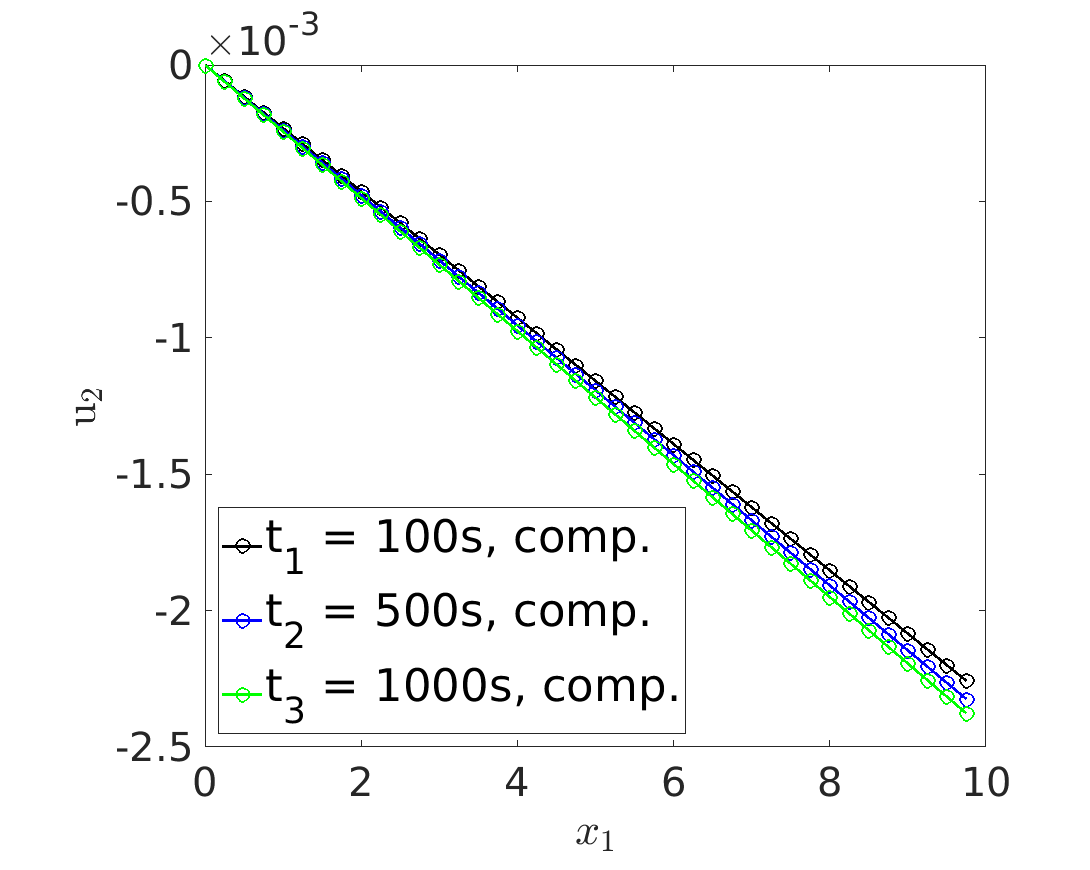}
        \caption{Displacement profile, 2nd component}
        \label{fig:disp2}
    \end{subfigure}
    \caption{Test case2: solution profiles for Mandel's problem at $t \in \{\SI{100}{s}, \SI{500}{s}, \SI{1000}{s}\}$, with $z = \SI{2e-4}{W.m^{-3}.K^{-1}}$, and $n_1 = n_2 = 40$.}\label{fig:plots}
\end{figure}

The temperature source now interacts with the other processes and thus has an effect on the pressure and horizontal component of the displacement. Furthermore, the temperature change in the system is now increasing with increasing time. Table~\ref{mandel:iterations} shows the number of iterations for Mandel's problem using the derived algorithms.
\begin{table}[h!]
\begin{center}
\begin{tabular}{| c || c | c || c | c || c | c |}
\hline
Heat source	&\ \ $z = 0$ \ \ &$z = $ 2e-4	&\ \ $z = 0$   \ \ &$z = $ 2e-4	&\ \ $z = 0$ 	\ \ &$z = $ 2e-4 \\ \hline \hline
$n_1 = n_2$	& \multicolumn{2}{c|| }{\textnormal{\textbf{HFM}} } & \multicolumn{2}{c||}{\textnormal{\textbf{HF--M}} } & \multicolumn{2}{c| }{\textnormal{\textbf{HM--F}} } \\
\hline \hline
10 	&18	&18	&14	&14	&14	&14			\\
20 	&18	&18	&13	&12	&13	&12			\\
40 	&18	&18	&13	&12	&13	&12			\\
\hline 
$n_1 = n_2$	& \multicolumn{2}{c|| }{\textnormal{\textbf{FM--H}} } & \multicolumn{2}{c||}{\textnormal{\textbf{H--F--M}} } & \multicolumn{2}{c| }{\textnormal{\textbf{F--H--M}} } \\
\hline \hline
10 	&18	&18	&14	&13	&14	&14			\\
20 	&18	&18	&13	&13	&13	&12			\\
40 	&18	&18	&13	&13	&13	&12			\\
\hline
\end{tabular}
\caption{Test case2: number of iterations  with decreasing mesh sizes for Mandel's problem. Stabilization from theory.}\label{mandel:iterations} 
\end{center}
\end{table}

\section{Conclusions}\label{sec:conclusions}
Based on developments on iterative splitting schemes from linear poroelasticity, we have proposed six novel iterative procedures for nonlinear thermo-poroelasticity. In particular, these algorithms are using stabilization and linearization techniques similar to~\cite{both2017robust, list2016study}, which is known in the literature as the `$L$-scheme'. The thermo-poroelastic problem we consider can be viewed as a coupling of three physical processes (or subproblems); flow, geomechanics and heat. Solving this system either monolithically (all three subproblems simultaneously), partially decoupled (two subproblems simultaneously), or fully decoupled (each subproblem separately), yields six possible combinations of coupling/decoupling which we have used to design the six algorithms. All of these involve a linearization of the convective term and added stabilization terms to both the flow and heat subproblems. In this sense, our use of the $L$-scheme is both as a stabilization for iterative splitting, and as a linearization of nonlinear problems. 

For any given situation the coupling strength between the three subproblems may vary. A-priori, the expectation is that solving together subproblems which are strongly coupled yields better efficiency properties than does splitting. On the other hand, if the coupling between two or more subproblems is weak, a splitting procedure might be beneficial. For this reason, and due to the fact that splitting the three-way coupled multi-physics problem into smaller subproblems, allows for combining existing codes that separately can handle any of the three processes involved (or two of them combined), six different algorithms are presented. These six algorithms covers all possibilities of strong/weak coupling between the three subproblems. Using the well-posedness of the continuous problem, we obtained lower bounds on the stabilization parameters, and proved the convergence of our proposed algorithms under a constraint on   the time step. In practice, however, we find that this bound is not tight; as long as the fluxes are not becoming unbounded (\textit{e.g.} due to a singularity) a `reasonable' time step can safely be chosen.

Our algorithms are tested in detail with several numerical examples. In particular, we find that all six algorithms are performing robustly with respect to both mesh refinement and different parameter regimes (i.e. strong/weak coupling between the subproblems and strong/weak nonlinear effects), using the stabilization revealed by our analysis. We also find that using no stabilization results in the algorithms being more sensitive to the parameter regimes, i.e. splitting subproblems which are strongly coupled yields high iteration numbers compared to solving these subproblems together. This phenomena is also observed in the stabilized algorithms, but to a significantly lesser extent. In particular, with no stabilization, each of the algorithms is suitable only for a certain parameter regime in contrast to the stabilized algorithms, which can handle a wide range of different parameters. 

\vspace*{-0.2cm}

\enlargethispage{0.2cm}

\bibliographystyle{siamplain}
\bibliography{ref}

\end{document}